\newtheorem{theorem}{Theorem}[section]
\newtheorem{main}{Theorem}
\newtheorem{fact}[theorem]{Fact}
\newtheorem{lemma}[theorem]{Lemma}
\newtheorem{proposition}[theorem]{Proposition}
\newtheorem{corollary}[theorem]{Corollary}
\newtheorem*{question}{Question}
\newcommand{\sub}{\subseteq}
\newcommand{\sm}{\setminus}
\renewcommand{\d}{\delta}
\newcommand{\FF}{\mathbb{F}}
\newcommand{\KK}{\mathbb{K}}
\newcommand{\NN}{\mathbb{N}}
\newcommand{\B}[1]{\mathbf{{#1}}}
\newcommand{\T}{\mathbf{T}}
\newcommand{\V}[1]{\mathbf{{#1}}}
\newcommand{\VV}{\mathbb{V}}
\newcommand{\class}[1]{{\sf {#1}}}
\newcommand{\poly}{{\sf poly}}
\DeclareMathOperator{\J}{\mathbf{J}}
\DeclareMathOperator{\complexity}{C}
\DeclareMathOperator{\ME}{H_\infty}
\DeclareMathOperator*{\Ex}{\mathbb{E}}
\DeclareMathOperator*{\Exp}{\mathbb{E}}
\DeclareMathOperator{\GL}{GL}
\DeclareMathOperator{\bias}{bias}
\DeclareMathOperator{\codim}{codim}
\DeclareMathOperator{\I}{I}
\DeclareMathOperator{\SR}{SR}
\DeclareMathOperator{\AR}{AR}
\DeclareMathOperator{\GR}{GR}
\DeclareMathOperator{\rank}{rank}
\begin{document}

\begin{frontmatter}[classification=text]

\title{Structure vs.\ Randomness for Bilinear Maps} 

\author[alex]{Alex Cohen}
\author[guy]{Guy Moshkovitz\thanks{This research was done as part of the 2020 NYC Discrete Math REU, supported by NSF awards DMS-1802059, DMS-1851420, and DMS-1953141.
		An earlier version of this paper appeared in the proceedings of the 53rd ACM Symposium on Theory of Computing (STOC 2021).}}

\begin{abstract}
We prove that the slice rank of a $3$-tensor (a combinatorial notion introduced by Tao in the context of the cap-set problem), the analytic rank (a Fourier-theoretic notion introduced by Gowers and Wolf), and the geometric rank 
(an algebro-geometric notion introduced by Kopparty, Moshkovitz, and Zuiddam)
are all equal up to an absolute constant.
As a corollary, we obtain strong trade-offs on the arithmetic complexity of a biased bilinear map, and on the separation between computing a bilinear map exactly and on average.
Our result settles open questions of Haramaty and Shpilka [STOC~2010], and of Lovett [Discrete Anal.~2019] for $3$-tensors.
\end{abstract}
\end{frontmatter}

\section{Introduction}


Bilinear maps stand at the forefront of many basic questions in combinatorics and theoretical computer science. 
A bilinear map is, intuitively, just a collection of matrices. 
Formally, a bilinear map $f \colon \FF^{n_1} \times \FF^{n_2} \to \FF^m$, where $\FF$ is any field, is a map $f(\B{x},\B{y})=(f_1(\B{x},\B{y}),\ldots,f_m(\B{x},\B{y}))$ whose every component $f_k$ is a bilinear form $f_k(\B{x},\B{y}) = \sum_{i,j} a_{i,j,k}x_iy_j$, or equivalently, $\B{x}^T A_k \B{y}$ for some matrix $A_k \in \FF^{n_1 \times n_2}$.
While linear maps are thoroughly understood thanks to linear algebra, 
bilinear maps are---in more than one way---still very much a mystery.

\subsection{Structure vs.\ randomness}

In this paper we prove a tight relation between the \emph{slice rank} and the \emph{analytic rank} of bilinear maps, or \emph{$3$-tensors}.
Our proof crucially uses the notion of \emph{geometric rank} as an intermediary, enabling the use of tools from algebraic geometry to ultimately prove that these three notions of rank are in fact equivalent up to a constant.

A $3$-tensor (or sometimes simply a tensor) over a field $\FF$ is a three-dimensional matrix $(a_{i,j,k})_{i,j,k}\in\FF^{n_1 \times n_2 \times n_3}$ with entries $a_{i,j,k} \in \FF$. 
Equivalently, a tensor can be thought of as a degree-$3$ polynomial, namely, a trilinear form $T(\B{x},\B{y},\B{z}) = \sum_{i,j,k} a_{i,j,k} x_iy_jz_k$ with coefficients $a_{i,j,k} \in \FF$, where $\B{x}=(x_1,\ldots,x_{n_1})$, $\B{y}=(y_1,\ldots,y_{n_2})$, $\B{z}=(z_1,\ldots,z_{n_3})$.\footnote{A trilinear form means that every monomial has exactly one variable from $\B{x}$, one from $\B{y}$, and one from $\B{z}$, and so is linear separately in each of $\B{x}$,$\B{y}$, and $\B{z}$.}
Note that a tensor is just a symmetric way to think of a bilinear map 
$f \colon \FF^{n_1} \times \FF^{n_2} \to \FF^{n_3}$, where $f=(f_1,\ldots,f_{n_3})$ with $f_k(\B{x},\B{y}) = \sum_{i,j} a_{i,j,k}x_iy_j$; indeed, each $f_k$ corresponds to a slice $(a_{i,j,k})_{i,j}$ of $T$.\footnote{Yet another point of view is that a $3$-tensor is a member of the vector space $V_1 \otimes V_2 \otimes V_3$, where $V_1=\FF^{n_1},V_2\in\FF^{n_2},V_3=\FF^{n_3}$ are finite-dimensional vector spaces over $\FF$.}
As opposed to matrices, which have only one notion of rank, there are multiple notions of rank for $3$-tensors. 
The notions of rank of $3$-tensors we consider are defined as follows:
\begin{itemize}
	\item The slice rank of $T$, denoted $\SR(T)$, is the smallest $r \in \NN$ such that $T$ can be decomposed as $T = \sum_{i=1}^r f_i g_i$ where $f_i$ is an $\FF$-linear form in either the $\B{x}$, $\B{y}$, or $\B{z}$ variables and $g_i$ is an $\FF$-bilinear form in the remaining two sets of variables, for each $i$.
	\item The analytic rank of $T$ over a finite field $\FF$ is given by $\AR(T) = -\log_{|\FF|}\Exp_{\B{x},\B{y},\B{z}} \chi(T(\B{x},\B{y},\B{z}))$,\footnote{We use $\Exp_\B{x}$ to denote averaging, so $\Exp_\B{x \in \FF^{n}}$ stands for $|\FF|^{-n}\sum_{\B{x} \in \FF^{n}}$.} where we fix $\chi$ to be any nontrivial additive character of $\FF$ (e.g., $\chi(x)=\exp(2\pi i x/p)$ when $\FF=\FF_p$ is prime). 
	\item The geometric rank of $T$, viewed as a bilinear map $f$,\footnote{Although permuting $\B{x},\B{y},\B{z}$ gives rise to three distinct bilinear maps corresponding to $T$, the definition of $\GR(T)$ is invariant under them (see Theorem~3.1 in~\cite{KoppartyMoZu20}).} is defined as $\GR(T) = \codim \ker f$, the codimension of the algebraic variety $\ker f = \{(\B{x},\B{y}) \in \overline{\FF}^{n_1} \times \overline{\FF}^{n_2} \mid f(\B{x},\B{y})=\B{0}\}$. 
\end{itemize}

We note that all three notions above generalize matrix rank. Moreover, just like matrix rank, for any $T \in \FF^{n \times n \times n}$ all three quantities lie in the range $[0,n]$. Furthermore, for the $n \times n \times n$ identity tensor $I_n$ we have $\SR(I_n)=\GR(I_n)=n$ and $\AR(I_n)=(1-o_{|\FF|}(1))n$.
The slice rank was defined by Tao~\cite{Tao16} in the context of the solution of the cap-set problem (similar notions have been considered before in other areas of research). 
The analytic rank was introduced by Gowers and Wolf~\cite{GowersWo11} in the context of higher-order Fourier analysis.
Roughly, this notion measures how close to uniform is the distribution of the values of the polynomial corresponding to the tensor.
The geometric rank is an algebro-geometric notion of rank that was recently introduced in~\cite{KoppartyMoZu20}. 
(A related notion was applied by Schmidt~\cite{Schmidt85} in the context of number theory.)
Intuitively, it measures the number of ``independent'' components of the corresponding bilinear map.
We use it as a geometric analogue 
of the bias of the 
(output distribution of the) 
bilinear map.

Understanding the structure of $d$-tensors, or $d$-dimensional matrices, that have low analytic rank is important in many applications of the structure-vs-randomness dichotomy, in additive combinatorics, coding theory and more (see, e.g.,~\cite{BhowmickLo15,GreenTao09}).
A recent breakthrough, obtained independently by Mili\'{c}evi\'{c}~\cite{Milicevic19} and by Janzer~\cite{Janzer19}, showed that the \emph{partition rank} of a $d$-tensor, which is a generalization of slice rank to $d$-tensors, is bounded from above by roughly $\AR(T)^{2^{2^{\poly(d)}}}$. For fixed $d$ this is a polynomial bound, 
which proves a conjecture of Kazhdan and Ziegler~\cite{KazhdanZi20}. 
Lovett~\cite{Lovett19}, as others have, asks whether in fact a linear upper bound holds.
For $3$-tensors, the best known bound until this work was $\SR(T) \le O(\AR(T)^4)$ by Haramaty and Shpilka~\cite{HaramatySh10}.
They write: ``It is an interesting open question to
decide whether we can do only with the $\sum_{j=1}^{O(\log_{|\FF|}1/\d)} \ell_i \cdot q_i$ part'', which refers to a linear upper bound $\SR(T) \le O(\AR(T))$.
Our main result is as follows.
\begin{main}[Main result]\label{main:summary}
	For any $3$-tensor $T$ over a field $\FF$, 
	$$\SR(T) \le 3\GR(T) \le 8.13\AR(T) $$
	where the first inequality holds over any perfect field\footnote{Commonly considered fields are perfect, including: any field of characteristic zero, any algebraically closed field, and any finite field.}, and the second for any finite field $\FF \neq \FF_2$.
\end{main}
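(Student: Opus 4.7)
The theorem asserts a chain $\SR(T) \le 3\GR(T) \le 8.13\AR(T)$, whose two inequalities I would establish independently. The easy directions $\GR(T) \le \min(\SR(T), \AR(T))$ are already known, so the task is to pass from geometric/analytic data back to a combinatorial slice decomposition.

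\textbf{Plan for $\SR(T) \le 3\GR(T)$.} The engine is the matrix-pencil characterization of geometric rank: writing $T(\mathbf{x},\mathbf{y},\mathbf{z}) = \sum_i x_i(\mathbf{y}^T A_i \mathbf{z})$ and $M(\mathbf{x}) := \sum_i x_i A_i$, one has $\rank M(\mathbf{x}) = \GR(T)$ for generic $\mathbf{x}$. Set $r := \GR(T)$ and induct on $n_1+n_2+n_3$. In the inductive step, I would use perfectness of $\FF$ to descend (via a Galois or averaging argument) from an $\mathbf{x}_0 \in \overline{\FF}^{n_1}$ to one in $\FF^{n_1}$ with $\rank M(\mathbf{x}_0) = r$; a linear change of basis on $\mathbf{y}$ and $\mathbf{z}$ puts $M(\mathbf{x}_0)$ in the form $I_r \oplus \mathbf{0}$. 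Splitting $\mathbf{y}=(\mathbf{y}',\mathbf{y}'')$ and $\mathbf{z}=(\mathbf{z}',\mathbf{z}'')$ into their first-$r$ and remaining coordinates, the three cross-terms $T(\cdot,\mathbf{y}',\mathbf{z}')$, $T(\cdot,\mathbf{y}'',\mathbf{z}')$, $T(\cdot,\mathbf{y}',\mathbf{z}'')$ each have a factor of dimension $r$, so each has slice rank at most $r$, contributing $3r$ slices to $T$ in total. The main obstacle is the residual lower-right sub-tensor $T_0 := T(\cdot,\mathbf{y}'',\mathbf{z}'')$: the naive induction does not close because the Schur-complement identity $D(\mathbf{x}) = C(\mathbf{x})A(\mathbf{x})^{-1}B(\mathbf{x})$ forced by $\rank M \equiv r$ permits $\GR(T_0)$ to remain as large as $r$. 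Closing the argument likely requires combining all three matrix-pencil representations of $T$ (valid since $\GR$ is permutation-invariant) so that a coordinated choice of base points $\mathbf{x}_0, \mathbf{y}_0, \mathbf{z}_0$ strictly drops the geometric rank of the residual.

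\textbf{Plan for $3\GR(T) \le 8.13\AR(T)$.} Summing the defining character sum for $\AR$ over $\mathbf{z}$ first yields the identity
\[
|\FF|^{-\AR(T)} \;=\; \frac{|\{(\mathbf{x},\mathbf{y}) \in \FF^{n_1} \times \FF^{n_2} : f(\mathbf{x},\mathbf{y}) = \mathbf{0}\}|}{|\FF|^{n_1+n_2}},
\]
so $\AR(T)$ is the logarithmic density of $\FF$-rational points on the kernel variety $\ker f$, whose codimension is $r := \GR(T)$. I would upper-bound this point count by stratifying $\ker f$ according to the rank of $M(\mathbf{x})$: the loci $\{\mathbf{x} : \rank M(\mathbf{x}) \le r-k\}$ are determinantal varieties whose codimensions grow at least as $k^2$, while their fibers in $\ker f$ are linear subspaces of dimension $n_2 - r + k$. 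A Lang--Weil-style estimate on each stratum, summed over $k$, produces an expression of the form $|\FF|^{n_1+n_2-r}$ times a geometric-series correction controlled by $|\FF|$. The main obstacle is bounding the degrees of these strata uniformly enough to control the Lang--Weil error; with this in hand, optimizing the bookkeeping (and excluding $\FF = \FF_2$ so the error stays small) extracts the explicit numerical constant $8.13/3 \approx e$.
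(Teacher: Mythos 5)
Both halves of your proposal contain genuine gaps, and the missing ideas are exactly the crux of the paper's argument.

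\textbf{First inequality.} You correctly identify the matrix-pencil picture and the obstacle: after normalizing $M(\mathbf{x}_0)$ to $I_r\oplus\mathbf{0}$ and splitting off the $3r$ cross-terms, the residual tensor $T_0$ supported on $\mathbf{y}'',\mathbf{z}''$ can still have geometric rank as large as $r$ (the Schur-complement problem). Your suggested fix---coordinating three matrix-pencil representations so a joint choice of base points strictly drops $\GR$---is not developed and it is not clear it can work; the paper does not attempt anything of this kind. The actual resolution abandons the ``peel off a rank-$r$ block and recurse on a smaller tensor'' template entirely. Instead one works with the matrix space $\V{L}\preceq\overline{\FF}^{n_2\times n_3}$ spanned by the slices and inducts on $r$, not on dimensions, proving that for every $r$
\[
\SR(\V{L}) \;\le\; 2r + \codim_{\V{L}}\bigl(\V{L}\cap\V{M}_r\bigr),
\]
and then minimizing over $r$ (using the known fact that $\GR(T)=\min_r\bigl(r+\codim\{\mathbf{x}:\rank M(\mathbf{x})\le r\}\bigr)$). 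The engine is a tangent-space computation: at a nonsingular point $A$ of the determinantal variety $\V{M}_r$, the tangent space $\T_A\V{M}_r=\{CA+AC'\}$ has slice rank at most $2r$, and since it contains $\T_A(\V{L}\cap\V{M}_r)$ its intersection with $\V{L}$ is a subspace of small slice rank whose codimension in $\V{L}$ is at most $\codim_{\V{L}}(\V{L}\cap\V{M}_r)$. Your proposal contains no analogue of this tangent-space idea, which is where the recursion is made to close. Also, your Galois-descent sketch for perfectness is different from what the paper does: the bound over $\overline{\FF}$ is actually $\overline{\SR}(T)\le 2\GR(T)$, and the factor $3/2$ degradation over $\FF$ comes from Derksen's $G$-stable rank machinery, not from descending a base point.

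\textbf{Second inequality.} Your identification of $\AR(T)$ as the log-density of $\FF$-points of $\ker f$ is correct. But the stratified Lang--Weil plan is both more complicated than needed and, as you note, lacks the degree bounds to make the error terms close. The paper proves instead a single-stroke Schwartz--Zippel bound for varieties: if $\V{V}\subseteq\overline{\FF}^n$ is cut out by polynomials of degree at most $d$ then $|\V{V}(\FF)|/|\FF|^n\le(d/|\FF|)^{\codim\V{V}}$, proved via an overdetermined B\'ezout inequality after cutting down to dimension $0$ with generic linear slices. Since $\ker f$ is cut out by bilinear (degree $2$) equations, taking $d=2$ gives $\AR(T)\ge(1-\log_{|\FF|}2)\GR(T)$ directly, with no stratification, no fiber analysis, and no Lang--Weil. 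For $|\FF|\ge 3$ this yields the constant $(1-\log_3 2)^{-1}\le 2.71$, which is where the $8.13=3\times 2.71$ comes from; it is not $e$ but $\log3/\log(3/2)$.

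In short: your proposal identifies the right objects and the right obstructions, but is missing the two load-bearing ideas (tangent spaces of determinantal varieties for the first inequality, and the B\'ezout-based Schwartz--Zippel for varieties for the second), and the gaps you flag are real ones that your sketch does not close.
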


We note that the reverse inequalities are easy: $\GR(T) \le \SR(T)$ (see Theorem~4.1 in~\cite{KoppartyMoZu20}) and $\AR(T) \le \SR(T)$ 
(see Lemma~2.2 of~\cite{KazhdanZi18} or~\cite{Lovett19}).
Thus, as mentioned above, an immediate---and perhaps surprising---corollary of Theorem~\ref{main:summary} is that the combinatorial notion $\SR(T)$, the algebro-geometric notion $\GR(T)$, and the analytic notion $\AR(T)$ are all, up to a constant, equivalent notions of rank. 
In particular, if one wants to estimate the slice rank of a $3$-tensor, as Tao did in a solution of the cap-set problem~\cite{Tao16}, then it is necessary and sufficient to instead estimate 
the bias of the tensor.

\subsection{Complexity vs.\ bias}

The importance of bilinear maps in theoretical computer science cannot be overstated.
One example, in the area of algebraic algorithms, is matrix multiplication. Note that the operation of multiplying two matrices $X,Y \in \FF^{m\times m}$ is a bilinear map $\class{MM}_n\colon \FF^{n}\times\FF^{n}\to\FF^{n}$ with $n=m^2$, as every entry of $XY$ is a bilinear form in the entries of $X$ and $Y$. It has been a persistent challenge to upper bound the \emph{arithmetic complexity} of matrix multiplication, that is, the minimum number of $+,-,\cdot,\div$ operations over $\FF$ required to express $\class{MM}_n$ in terms of its variables. Current research puts the complexity of $\class{MM}_n$ below $O(n^{1.2})$ (the state of the art is $O(n^{1.18643})$ due to Alman and Williams~\cite{AlmanWil20}), with the ultimate goal of getting all the way down to $n^{1+o(1)}$.
For another example of the challenge of bilinear maps, this time in the area of circuit complexity, we mention that explicitly finding even a single bilinear map\footnote{Or, equivalently, a single degree-$3$ polynomial $\sum_{k=1}^N f_k(\B{x},\B{y})z_k$.}
$f\colon\FF^{n}\times\FF^{n}\to\FF^{n}$ with provably superlinear arithmetic complexity, say $\Omega(n^{1.001})$, would imply the first such lower bound in circuit complexity. This should be compared with the fact that almost every bilinear map $f\colon\FF^{n}\times\FF^{n}\to\FF^{n}$ has arithmetic complexity $\Theta(n^2)$.
Finally, in the area of identity testing, it was shown by Valiant~\cite{Valiant79} that identity testing of formulas reduces to deciding whether a given bilinear map $f\colon\FF^{n}\times\FF^{n}\to\FF^{m}$ has full \emph{commutative rank}, meaning a linear combination of its components $f_i$ has full rank $n$. 
Whether this can be decided efficiently remains an open question, 
despite being raised by Edmonds~\cite{Edmonds67} in the early days of computer science, and it has close ties with a variety of other topics, from perfect matchings in bipartite graphs to matrix scaling (see~\cite{GargGuOlWi20}).

Given the importance of bilinear maps, we propose studying other foundational questions of theoretical computer science via the lens of bilinear maps. 
Consider Mahaney's Theorem~\cite{Mahaney82}, a classical result in computational complexity.
It states that, assuming $\class{P}\neq~\class{NP}$, 
no $\class{NP}$-hard language is \emph{sparse}.
Phrased differently, if a boolean function $f\colon\{0,1\}^*\to\{0,1\}$ is 
``extremely'' biased in the sense that
$|f^{-1}(1) \cap \{0,1\}^n| \le \poly(n)$, 
then it is not  $\class{NP}$-hard.
Multiple other classical results in the same vein have been proved (see \cite{Fortune79,HartmanisImSe85,OgiwaraWa91,CaiSi95,CaiSi99,Grochow16}), giving implications of 
such extreme bias
for various complexity classes.
This raises the following fundamental question.
\begin{question}
	For a given class of functions, equipped with notions of complexity and bias,
	what is the best complexity upper bound in terms of bias?\footnote{Contrapositively, this quantifies the phenomenon of high complexity functions exhibiting little bias.} 
\end{question}

We make progress towards the above Question for the class of bilinear maps $f$; 
our notion of complexity is multiplicative complexity $\complexity^*(f)$, which is the number of (non-scalar) multiplications needed to compute $f$ by an arithmetic circuit; 
our notion of bias is the \emph{min-entropy} $\ME(f)$ of the output distribution of $f$.
See Section~\ref{sec:apps} for a more formal discussion and the proof.

\begin{proposition}\label{main:complexity-bias}
	For any bilinear map $f \colon \FF^{n} \times \FF^n \to \FF^n$ over any 
	finite field $\FF \neq \FF_2$,
	$$\complexity^*(f) = O\Big(\frac{\ME(f)}{\log_2|\FF|}n\Big) .$$
\end{proposition}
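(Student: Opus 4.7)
The plan is to reduce the proposition to Theorem~\ref{main:summary} via two short intermediate bounds: (a)~the analytic rank of the tensor $T$ associated with $f$ satisfies $\AR(T)\log_2|\FF| = \ME(f)$, and (b)~any slice rank decomposition of $T$ yields an arithmetic circuit with at most $n\cdot\SR(T)$ non-scalar multiplications. Chaining these with Theorem~\ref{main:summary} will immediately give
\[
\complexity^*(f) \;\le\; n\cdot\SR(T) \;\le\; 8.13\,n\cdot\AR(T) \;=\; 8.13\,n\cdot\ME(f)/\log_2|\FF|,
\]
which is the desired bound.

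For step~(a), I would first identify $\0$ as the mode of the output distribution of $f$, so that $\ME(f) = -\log_2\Pr[f(\B{x},\B{y})=\0]$. Expanding the probability by Fourier inversion gives $\Pr[f(\B{x},\B{y})=\B{z}] = \Exp_\B{w}\chi(-\B{w}\cdot\B{z})\,\hat\mu(\B{w})$, where $\hat\mu(\B{w}) := \Exp_{\B{x},\B{y}}\chi(\B{x}^{T}A_\B{w}\B{y})$ and $A_\B{w} := \sum_k w_k A_k$. Averaging over $\B{y}$ first shows $\hat\mu(\B{w}) = |\FF|^{-\rank A_\B{w}} \ge 0$, so the triangle inequality yields $\Pr[f=\B{z}] \le \Exp_\B{w}\hat\mu(\B{w}) = \Pr[f=\0]$ for every $\B{z}$. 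Meanwhile, averaging $\chi(T)$ over $\B{z}$ first in the definition of the analytic rank gives $|\FF|^{-\AR(T)} = \Pr[f(\B{x},\B{y})=\0]$; taking $-\log_2$ of both expressions yields (a).

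For step~(b), I would take a minimal slice rank decomposition $T = \sum_{i=1}^{\SR(T)} T_i$ and process each of the three flavours of slice-rank-one term. A term $T_i = u_i(\B{z})B_i(\B{x},\B{y})$ makes every output coordinate a scalar multiple of the \emph{single} bilinear form $B_i(\B{x},\B{y})$; since $B_i$ is a bilinear form on $\FF^n\times\FF^n$ its matrix rank is at most $n$, so it admits a circuit with at most $n$ non-scalar multiplications. A term $T_i = u_i(\B{x})B_i(\B{y},\B{z})$ contributes $u_i(\B{x})\cdot v_{i,k}(\B{y})$ to output coordinate $k$, computable with $n$ non-scalar multiplications (multiplying the already-evaluated scalar $u_i(\B{x})$ by each linear form $v_{i,k}(\B{y})$); the third case $T_i = u_i(\B{y})B_i(\B{x},\B{z})$ is symmetric. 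Summing across the $\SR(T)$ terms gives $\complexity^*(f) \le n\cdot\SR(T)$.

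The one conceptual subtlety I anticipate is step~(a): without the fact that $\0$ is the mode of $f$'s output distribution, the inequality between $\ME(f)$ and $\AR(T)$ would point the wrong way and Theorem~\ref{main:summary} would yield no useful bound on the complexity. Once this is in hand, (b) is a routine unpacking of the slice rank decomposition and the whole proposition follows by concatenation.
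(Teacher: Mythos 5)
Your proposal is correct and follows essentially the same route as the paper: establish $\ME(f) = \AR(f)\log_2|\FF|$ by showing $\B{0}$ is the mode of the output distribution, invoke Theorem~\ref{main:summary} to convert $\AR$ into $\SR$, and then pass to multiplicative complexity via $\complexity^*(f) \le n\cdot\SR(T)$ (a step the paper simply asserts as immediate from the definitions, and you unpack explicitly and correctly). The one point where you deviate is the proof that $\B{0}$ is the mode: you run a Fourier-inversion argument, computing $\hat\mu(\B{w}) = |\FF|^{-\rank A_\B{w}} \ge 0$ and applying the triangle inequality, whereas the paper argues more elementarily that for every fixed $\B{y}$ the map $\B{x}\mapsto f(\B{x},\B{y})$ is linear, hence its $\B{0}$-fibre is the largest, and then averages over $\B{y}$. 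Both are valid; the paper's version avoids characters entirely in this sublemma, while yours yields the exact formula for the Fourier coefficients of the output distribution as a byproduct.
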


Another closely related classical result says that, assuming $\class{P} \neq \class{NP}$, it is impossible to efficiently solve $\class{SAT}$ on all but at most polynomially many inputs (this is sometimes phrased as saying that $\class{SAT}$ is not ``$\class{P}$-close'').
Again, this raises a fundamental question: 
For a given class of functions, what is the best possible approximation of a hard function?
Put differently, what is the best possible worst-case to average-case reduction?
For bilinear maps, we give an optimal answer to this question;
again, see Section~\ref{sec:apps} for a more formal discussion and the proof.
We say that maps $f,g$ are \emph{$\d$-close} if $\Pr_{x}[f(x) = g(x)] = \d$,\footnote{We use $\Pr_x$ to denote probability under a uniform choice of $x$.}
and denote by $\SR(f)$ the slice rank of the $3$-tensor corresponding to a bilinear map $f$.

\begin{proposition}\label{main:reduction}
	Let $\FF \neq \FF_2$ be a finite field.
	Any two bilinear maps $f,g\colon \FF^n\to\FF^m$ that are $\d$-close satisfy
	$$|\SR(f) - \SR(g)| \le O(\log_{|\FF|}(1/\d)) .$$
	Moreover, this bound is best possible up to the implicit absolute constant.
\end{proposition}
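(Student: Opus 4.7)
The plan is to reduce the Proposition to the Main Theorem using two essentially routine ingredients: the subadditivity of slice rank, and a standard identity relating the analytic rank of a bilinear map to the probability that it vanishes.

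Set $h := f - g$; since bilinear maps form a vector space, $h$ is again bilinear, and the hypothesis of the Proposition translates into $\Pr_{\B{x}}[h(\B{x}) = \0] = \d$. Viewing $h$ as $h\colon\FF^{n_1}\times\FF^{n_2}\to\FF^m$ in the natural way, the trilinear form corresponding to $h$ is $T_h(\B{x},\B{y},\B{z}) = \B{z}^T h(\B{x},\B{y})$, and orthogonality of characters applied over $\B{z}$ yields
$$\Exp_{\B{x},\B{y},\B{z}}\chi(T_h(\B{x},\B{y},\B{z})) = \Pr_{\B{x},\B{y}}[h(\B{x},\B{y}) = \0] = \d,$$
whence $\AR(h) = \log_{|\FF|}(1/\d)$. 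Next, slice rank is subadditive: concatenating slice decompositions of two tensors produces a slice decomposition of their sum, so $\SR(T_1+T_2) \le \SR(T_1)+\SR(T_2)$. Applying this to both $f = g+h$ and $g = f+(-h)$, together with $\SR(-h) = \SR(h)$, gives $|\SR(f)-\SR(g)| \le \SR(h)$. Combining with the Main Theorem (valid since $\FF \ne \FF_2$) I obtain
$$|\SR(f)-\SR(g)| \le \SR(h) \le 8.13\,\AR(h) = 8.13\log_{|\FF|}(1/\d) = O(\log_{|\FF|}(1/\d)).$$

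For the tightness claim, I would take $g = 0$ and $f = I_r$, the identity bilinear map $\FF^r\times\FF^r\to\FF^r$ sending $(\B{x},\B{y}) \mapsto (x_1y_1,\ldots,x_ry_r)$ (embedded into the given dimensions by padding with zeros). Then $\SR(f)-\SR(g) = r$, while $f$ and $g$ are $\d$-close for
$$\d = \Pr_{\B{x},\B{y}}\Big[x_iy_i = 0 \text{ for all } i\Big] = \Big(\frac{2|\FF|-1}{|\FF|^2}\Big)^r,$$
so $\log_{|\FF|}(1/\d) = r\cdot\log_{|\FF|}\!\big(|\FF|^2/(2|\FF|-1)\big) \in [cr, r]$ for some absolute $c > 0$ (holding uniformly for all $|\FF| \ge 3$). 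Varying $r$ thus produces a family of pairs witnessing tightness of the bound up to an absolute constant.

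The only real obstacle here is the Main Theorem itself, which supplies the linear bound $\SR = O(\AR)$; once granted, the Proposition follows from the elementary arguments above. Without the Main Theorem, existing slice-rank-vs-analytic-rank relations would yield only a much weaker (e.g.\ polynomial of high degree) dependence on $\log_{|\FF|}(1/\d)$.
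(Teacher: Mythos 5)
Your proof is correct and follows essentially the same route as the paper: subadditivity of slice rank reduces the problem to bounding $\SR(f-g)$, the character computation gives $\AR(f-g)=\log_{|\FF|}(1/\d)$, and the Main Theorem supplies $\SR(f-g)\le O(\AR(f-g))$. The only difference is in the tightness witness, where you take $g=\0$ and $f=I_r$ whereas the paper uses two identity blocks on disjoint coordinates (of sizes $r$ and $t=\Theta(r)$) so as to exhibit tightness simultaneously for all prescribed values of $\SR(f)$ and $\SR(g)$, not just $\SR(g)=0$; both constructions suffice for the stated proposition.
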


We note that Kaufman and Lovett~\cite{KaufmanLo08} prove such a reduction for degree-$d$ polynomials over general finite fields, improving a previous result by Green and Tao~\cite{GreenTao09}. However, their reduction is qualitative in nature and the implied bounds are far from optimal (see the next subsection for more discussion on previous results and techniques).

\subsection {Proof overview}

Our proof for the bound $\SR(T) = O(\AR(T))$ in Theorem~\ref{main:summary} (and ultimately for complexity-vs-bias trade-offs for bilinear maps in Section~\ref{sec:apps}) 
goes through an algebraically closed field---despite the statement ostensibly being about polynomials over finite fields.
We use the concepts of dimension and tangent spaces from algebraic geometry to obtain our slice rank decomposition, which ends up yielding the bound $\SR(T)=O(\GR(T))$ (Theorem~\ref{theo:main}). To finish the proof of Theorem~\ref{main:summary}, we prove a new generalization of the Schwartz-Zippel lemma appropriate for our setting, which yields $\GR(T) = O(\AR(T))$ (Proposition~\ref{prop:GR-AR}).

To obtain the slice rank decomposition mentioned above, we first prove a result about linear spaces of matrices in which low-rank matrices have high dimension: 
we show that in any such space, one can always find a somewhat large \emph{decomposable} subspace (Proposition~\ref{theo:core}, Item~(\ref{item:codim-bd})); following Atkinson and Lloyd~\cite{AtkinsonLl80} (see also~\cite{FortinRe04}), a space of matrices is decomposable if, roughly speaking, there is a basis where all the matrices have the same block of zeros. 
This result is proved by looking at a tangent space to a determinantal variety at a non-singular point, which turns out to be a decomposable matrix space in the sense above (Proposition~\ref{fact:tangent-matrices}). 
We further show 
that such a matrix space can be thought of as a $3$-tensor of low slice rank (Lemma~\ref{lemma:SR-Mr}). 
Since the above is proved by applying algebro-geometric tools on varieties, which naturally live over the algebraic closure of our finite field, the resulting slice rank decomposition has coefficients in the closure; however, using a result of Derksen~\cite{Derksen20}, one can convert a small slice rank decomposition over the closure into a decomposition over the base field (Proposition~\ref{prop:stable-rk}).
Finally, we use a result of~\cite{KoppartyMoZu20} to combine the slice rank information we obtained above into a bound on the geometric rank (Fact~\ref{fact:GR}).

We note that our arguments diverge from proofs used in previous works. In particular, we do not use results from additive combinatorics at all, nor do we use any ``regularity lemma'' for polynomials or notions of quasi-randomness.  Instead, our arguments use a combination of algebraic and geometric ideas, which perhaps helps explain why we are able to obtain linear upper bounds.

\paragraph{Paper organization.}
We begin Section~\ref{sec:tan} by giving a brief review of a few basic concepts from algebraic geometry, then determine the behavior of certain tangent spaces, and end by proving a slice rank upper bound related to these tangent spaces.
The first and second inequalities of Theorem~\ref{main:summary} are proved in Section~\ref{sec:SR-GR} and in Section~\ref{sec:GR-AR}, respectively.
In Section~\ref{sec:apps} we prove Proposition~\ref{main:complexity-bias} 
and Proposition~\ref{main:reduction}. 
We end with some discussion and open questions in Section~\ref{sec:open}.

\section{Tangent spaces and slice rank}\label{sec:tan}

\subsection{Algebraic geometry essentials}

We will need only a very small number of basic concepts from algebraic geometry, which we quickly review next. All the material here can be found in standard textbooks (e.g.,~\cite{Harris,Shafarevich}).
A \emph{variety} $\V{V}$ is the set of solutions, in an algebraically closed field, of some finite set of polynomials.
More formally, for a field $\FF$,\footnote{We henceforth denote by $\overline{\FF}$ the algebraic closure of the field $\FF$.}
the variety $\V{V} \sub \overline{\FF}^n$ \emph{cut out}
by the polynomials $f_1,\ldots,f_m \in \FF[x_1,\ldots,x_n]$ is 
$$\V{V} = \VV(f_1,\ldots,f_m) := \{ \B{x} \in \overline{\FF}^n \mid  f_1(\B{x})=\cdots=f_m(\B{x})=0\} .$$
We say that $\V{V} \sub \overline{\FF}^n$ is \emph{defined} over $\FF$ as it can be cut out by polynomials whose coefficients lie in $\FF$.
The ideal of $\V{V}$ is $\I(\V{V})=\{f \in \FF[\B{x}] \mid \forall p \in \V{V} \colon f(p)=0\}$.
Any variety $\V{V}$ can be uniquely written as the union of \emph{irreducible} varieties, where a variety is said to be irreducible if it cannot be written as the union of strictly contained varieties.
The \emph{dimension} of a variety $\V{V}$, denoted $\dim\V{V}$, is the maximal length $d$ of a chain of irreducible varieties 
$\emptyset \neq \V{V}_1 \subsetneq\cdots\subsetneq \V{V}_d \subsetneq \V{V}$.
The \emph{codimension} of $\V{V} \sub \overline{\FF}^n$ is simply $\codim\V{V}=n-\dim\V{V}$.

\subsection{Notation}\label{subsec:notation}

In the rest of the paper we will often find it convenient to identify, with a slight abuse of notation,
a bilinear map $f \colon \FF^{n_1} \times \FF^{n_2} \to \FF^m$ (or tensor) with a linear subspace of matrices, or \emph{matrix space}, $\V{L} \preceq \FF^{n_1 \times n_2}$.
If $f=(f_1,\ldots,f_m)$, we will identify $f$ with the linear subspace $\V{L}$ spanned by the $m$ matrices corresponding to the bilinear forms $f_1,\ldots,f_m$.
Note that this identification is not a correspondence, as it involves choosing a basis for $\V{L}$.  
Importantly, however, since the notions of tensor rank that we study are invariant under the action of the general linear group $\mathrm{GL}_n$ on each of the axes, the choice of basis we make is immaterial in the definition of rank, meaning that $\GR(\V{L})$, $\SR(\V{L})$, $\AR(\V{L})$ are nevertheless well defined. 

For the reader's convenience, we summarize below the different perspectives of tensor/bilinear map/matrix space that we use, and how they relate to each other:
\begin{itemize}
	\item A tensor $T = (a_{i,j,k}) \in \FF^{n_1\times n_2 \times n_3}$,  
	or a multilinear form $T(\B{x},\B{y},\B{z}) = \sum_{i,j,k} a_{i,j,k}x_iy_jz_k$.
	\item A bilinear form $f=(f_1,\ldots,f_{n_3}) \colon \FF^{n_1} \times \FF^{n_2} \to \FF^{n_3}$ with $f_k(\B{x},\B{y}) = \sum_{i,j} a_{i,j,k}x_iy_j$.
	\item A matrix space $\V{L} \preceq \FF^{n_1 \times n_2}$ spanned by $\{A_1,\ldots,A_{n_3}\}$ where $A_k = (a_{i,j,k})_{i,j}$.
\end{itemize}

\subsection{Tangent spaces of a variety}

For a variety $\V{V} \sub \KK^n$, the tangent space $\T_p\V{V}$ to $\V{V}$ at the point $p \in \V{V}$ is the linear subspace
$$\T_p\V{V} = \Big\{ \B{v} \in \KK^n \,\Big\vert\, \forall g \in \I(\V{V}) \colon \frac{\partial g}{\partial \B{v}}(p) = 0 \Big\}.$$
Equivalently, for any choice of a generating set $\{g_1,\ldots,g_s\} \sub \KK[x_1,\ldots,x_n]$ for the ideal $\I(\V{V})$ (which is finitely generated by Hilbert's basis theorem), the tangent space at $p \in \V{V}$ is $\T_p\V{V} = \ker \J_p$, where $\J_p$ is the Jacobian matrix
$$\begin{pmatrix}
	\frac{\partial{g_1}}{\partial x_1}(p) & \cdots & \frac{\partial{g_1}}{\partial x_n}(p) \\
	\vdots  & \ddots & \vdots  \\
	\frac{\partial{g_s}}{\partial x_1}(p) & \cdots & \frac{\partial{g_s}}{\partial x_1}(p) 
\end{pmatrix}_{s\times n} .$$

We will need the following basic fact about tangent spaces (for a proof see, e.g., Theorem~2.3 in~\cite{Shafarevich}).
\begin{fact}\label{fact:dim-tangent}
	For any irreducible variety $\V{V}$ and any $p \in \V{V}$ we have
	$\dim \T_p \V{V} \ge \dim \V{V}$.
\end{fact}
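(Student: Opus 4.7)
The plan is to reduce the inequality to two classical pieces of commutative algebra applied to the local ring of $\V{V}$ at $p$. Write $R = \mathcal{O}_{\V{V},p}$ for this local ring, $\mathfrak{m} \subseteq R$ for its maximal ideal, and $\kappa = R/\mathfrak{m}$ for the residue field. Since $\V{V}$ is irreducible, the Krull dimension of $R$ equals $\dim \V{V}$, so it suffices to relate $\dim \T_p\V{V}$ to invariants of $R$.

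The first step is to identify the Zariski tangent space with the $\kappa$-dual of the cotangent space, $\T_p\V{V} \cong (\mathfrak{m}/\mathfrak{m}^2)^*$. From the definition in the excerpt, a tangent vector $\mathbf{v}$ acts on $\overline{\FF}[x_1,\ldots,x_n]$ by the directional derivative $f \mapsto (\partial f/\partial \mathbf{v})(p)$, which vanishes on $\I(\V{V})$ by hypothesis and hence descends to a $\kappa$-linear functional on the image of $\mathfrak{m}$ in $R$; this functional then kills $\mathfrak{m}^2$ by the product rule. Conversely, every such functional lifts to a unique $\mathbf{v} \in \overline{\FF}^n$ that lies in $\T_p\V{V}$. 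This gives $\dim \T_p\V{V} = \dim_\kappa (\mathfrak{m}/\mathfrak{m}^2)$, reducing the task to $\dim_\kappa (\mathfrak{m}/\mathfrak{m}^2) \ge \dim R$.

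The second step invokes two textbook theorems. Nakayama's lemma yields $\dim_\kappa (\mathfrak{m}/\mathfrak{m}^2) = \mu(\mathfrak{m})$, the minimum number of generators of $\mathfrak{m}$ as an ideal of $R$. Krull's height theorem then asserts that any ideal in a Noetherian ring generated by $r$ elements has height at most $r$, so $\mu(\mathfrak{m}) \ge \mathrm{height}(\mathfrak{m}) = \dim R$. Chaining these inequalities gives $\dim \T_p\V{V} \ge \dim \V{V}$, as claimed.

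The main obstacle is the first step: the identification $\T_p\V{V} \cong (\mathfrak{m}/\mathfrak{m}^2)^*$ must be set up carefully, since one is passing from the extrinsic Jacobian picture in the excerpt to the intrinsic local-ring description where Nakayama and Krull are native. Once that translation is in place, the rest is mechanical. An alternative, purely geometric route would use lower-semicontinuity of $\rank \J_p$ in $p$ (the condition $\rank \J_p \ge r$ is the Zariski-open non-vanishing of some $r \times r$ minor) together with the existence of a smooth point of $\V{V}$, but verifying the latter ultimately rests on the same commutative-algebra ingredients, so I would favour the algebraic route.
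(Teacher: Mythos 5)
The paper does not prove Fact~\ref{fact:dim-tangent}; it cites Shafarevich (Theorem~2.3) and uses it as a black box, so there is no proof in the text to compare against. Your argument is correct and is the standard commutative-algebraic proof of this inequality: identify the embedded Zariski tangent space with $(\mathfrak{m}/\mathfrak{m}^2)^*$, apply Nakayama to read off the minimal number of generators of $\mathfrak{m}$, and apply Krull's height theorem to bound that below by $\dim\mathcal{O}_{\V{V},p}$, which equals $\dim\V{V}$ because $\V{V}$ is irreducible and of finite type over an algebraically closed field. The identification in your first step is handled correctly (directional derivatives along $\I(\V{V})$ vanish, kill $\mathfrak{m}^2$ by Leibniz, and the coordinate functions $x_i-p_i$ give the inverse); the only point worth spelling out in a final write-up is the equality $\dim\mathcal{O}_{\V{V},p}=\dim\V{V}$, which relies on the fact that a finitely generated domain over a field is equicodimensional. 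Your remark that the ``geometric'' route via semicontinuity of $\rank\J_p$ plus existence of a smooth point reduces to the same algebra is also accurate.
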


We will also need the following easy observation about the interplay between tangents and intersections.

\begin{proposition}\label{prop:tangent-cap}
	For any two varieties $\V{V}$ and $\V{W}$, and any $p \in \V{V} \cap \V{W}$, 
	$$\T_p (\V{V} \cap \V{W}) \sub \T_p\V{V} \,\cap\, \T_p\V{W}.$$
	In particular, if $\V{V} \sub \V{W}$ then $\T_p \V{V} \sub \T_p \V{W}$.
\end{proposition}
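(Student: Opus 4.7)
The plan is to prove this directly from the intrinsic definition of the tangent space, using the fact that passing to a subvariety can only \emph{enlarge} the defining ideal, and consequently can only \emph{shrink} the tangent space.

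First I would record the following monotonicity principle: if $\V{V}' \sub \V{V}$ are varieties, then $\I(\V{V}') \supseteq \I(\V{V})$, since any polynomial vanishing on all of $\V{V}$ in particular vanishes on the subset $\V{V}'$. Combining this with the definition
$$\T_p \V{V} = \Big\{ \B{v} \in \KK^n \,\Big\vert\, \forall g \in \I(\V{V}) \colon \frac{\partial g}{\partial \B{v}}(p) = 0 \Big\}$$
immediately yields $\T_p \V{V}' \sub \T_p \V{V}$ for every $p \in \V{V}'$: every directional-derivative condition imposed by $\I(\V{V})$ is also imposed by the larger ideal $\I(\V{V}')$, so the subspace of $\B{v}$ satisfying all of them can only get smaller.

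Applying this monotonicity to the two inclusions $\V{V} \cap \V{W} \sub \V{V}$ and $\V{V} \cap \V{W} \sub \V{W}$ gives $\T_p(\V{V} \cap \V{W}) \sub \T_p \V{V}$ and $\T_p(\V{V} \cap \V{W}) \sub \T_p \V{W}$ for any $p \in \V{V} \cap \V{W}$, and intersecting these two inclusions yields the claimed containment $\T_p(\V{V} \cap \V{W}) \sub \T_p \V{V} \cap \T_p \V{W}$. For the ``in particular'' clause, note that $\V{V} \sub \V{W}$ implies $\V{V} \cap \V{W} = \V{V}$, so the displayed inclusion specializes to $\T_p \V{V} \sub \T_p \V{V} \cap \T_p \V{W} \sub \T_p \V{W}$.

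There is essentially no obstacle: the entire content of the proposition is the one-line observation that the definition of $\T_p$ is contravariant in the ideal, and the ideal is covariant in the subvariety inclusion. I would not need any generating sets or Jacobians, nor irreducibility, nor the precise relationship $\I(\V{V} \cap \V{W}) = \sqrt{\I(\V{V}) + \I(\V{W})}$ --- only the trivial containment $\I(\V{V} \cap \V{W}) \supseteq \I(\V{V}) \cup \I(\V{W})$.
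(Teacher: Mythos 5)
Your proof is correct and takes essentially the same approach as the paper: both argue that $\I(\V{V}) \sub \I(\V{V}\cap\V{W})$ and $\I(\V{W}) \sub \I(\V{V}\cap\V{W})$, then read off the tangent-space containments directly from the ideal-theoretic definition of $\T_p$. Your phrasing of this as a general monotonicity principle applied twice is just a slight repackaging of the paper's argument.
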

\begin{proof}
	We have $\I(\V{V}) \sub \I(\V{V}\cap \V{W})$ and $\I(\V{W}) \sub \I(\V{V}\cap \V{W})$. Therefore, by the definition of a tangent space, for any $p \in \V{V}\cap\V{W}$ we have $\T_p(\V{V}\cap\V{W}) \sub \T_p\V{V}$ and $\T_p(\V{V}\cap\V{W}) \sub \T_p\V{W}$,
	and thus also $\T_p(\V{V}\cap\V{W}) \sub  \T_p\V{V} \cap  \T_p\V{W}$, as claimed.
\end{proof}

\subsection{Slice rank of tangent spaces of determinantal varieties}

We henceforth denote by $\V{M}_r = \V{M}_r(\KK^{m\times n}) \sub \KK^{m \times n}$ the variety of matrices in $\KK^{m\times n}$ of rank at most $r$.
Note that $\V{M}_r$ is indeed a variety, as it is cut out by a finite set of polynomials: all $(r+1)\times(r+1)$ minors.
It is therefore referred to in the literature as a \emph{determinantal variety}.

The following crucial lemma shows that certain tangent spaces of the variety $\V{M}_r=\V{M}_{r}(\KK^{m \times n})$,
which are matrix spaces, 
have a small slice rank (recall Subsection~\ref{subsec:notation} for the terminology).

\begin{lemma}[Slice rank of tangents]\label{lemma:SR-Mr}
	The tangent space to $\V{M}_r=\V{M}_r(\KK^{m \times n})$, for any algebraically closed field $\KK$, at any matrix $A \in \V{M}_r$ with $\rank(A)=r$ satisfies
	$$\SR(\T_A \V{M}_r) \le 2r.$$
\end{lemma}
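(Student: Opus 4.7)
The plan is to write down the tangent space $\T_A \V{M}_r$ explicitly, change coordinates so that this matrix space has a large block of forced zeros, and then read off a slice rank decomposition.

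First I would recall the standard description of the tangent space of the determinantal variety at a smooth point. Because $\rank(A) = r$, the matrix $A$ lies in the smooth locus of $\V{M}_r$, and factors as $A = U_0 V_0^T$ with $U_0 \in \KK^{m \times r}$ and $V_0 \in \KK^{n \times r}$ both of full column rank. Consider the morphism $\phi \colon \KK^{m \times r} \times \KK^{n \times r} \to \V{M}_r$ given by $(U,V) \mapsto UV^T$; its image is all of $\V{M}_r$. Differentiating at $(U_0,V_0)$ yields the linear map $(U',V') \mapsto U_0 V'^T + U' V_0^T$, whose image lies inside $\T_A \V{M}_r$. A direct count shows the image has dimension $r(m+n)-r^2 = \dim \V{M}_r$, and since $A$ is a smooth point, $\dim \T_A \V{M}_r = \dim \V{M}_r$ by Fact~\ref{fact:dim-tangent}. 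Therefore
$$\T_A \V{M}_r = \{\,U_0 V'^T + U' V_0^T \,:\, U' \in \KK^{m \times r},\, V' \in \KK^{n \times r}\,\}.$$

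Next I would change coordinates. Since slice rank is invariant under the action of $\GL_m \times \GL_n$ on the first two axes of the associated tensor, I may apply elements of $\GL_m$ and $\GL_n$ to replace $U_0$ and $V_0$ by $\bigl(\begin{smallmatrix} I_r \\ 0 \end{smallmatrix}\bigr)$. In these new coordinates, every $B \in \T_A \V{M}_r$ is the sum of a matrix supported on its first $r$ rows (namely $U_0 V'^T$) and a matrix supported on its first $r$ columns (namely $U' V_0^T$). Equivalently, the bottom-right $(m-r) \times (n-r)$ block of every $B \in \T_A \V{M}_r$ is zero.

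Finally, I would read off the decomposition. Let $\V{L} = \T_A \V{M}_r$ and let $T(\B{x},\B{y},\B{z})$ be the trilinear form associated to $\V{L}$ (so the $\B{z}$-variables range over a basis of $\V{L}$). By the previous paragraph, every monomial $x_i y_j z_k$ appearing in $T$ satisfies $i \le r$ or $j \le r$, so $T$ admits the decomposition
$$T(\B{x},\B{y},\B{z}) = \sum_{i=1}^{r} x_i \cdot g_i(\B{y},\B{z}) \;+\; \sum_{j=1}^{r} y_j \cdot h_j(\B{x},\B{z}),$$
where $h_j$ is defined using only the indices $i > r$ (to avoid double counting). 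This is a sum of $2r$ terms, each a product of a linear form (in one set of variables) and a bilinear form (in the other two), so $\SR(\V{L}) \le 2r$ as required.

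The main obstacle is the first step: pinning down the tangent space explicitly. Computing it directly from the Jacobian of the $(r+1)\times(r+1)$ minors is feasible but messy, so the cleanest route is the smoothness/parametrization argument sketched above, which combines the image of $d\phi$ with the dimension inequality in Fact~\ref{fact:dim-tangent}. Once the parametrization is in hand, the change of basis and the bookkeeping of the decomposition are essentially automatic.
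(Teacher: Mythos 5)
Your overall strategy matches the paper's: obtain an explicit description of $\T_A\V{M}_r$, then read off a slice-rank-$2r$ decomposition. The route to that description is genuinely different, though. The paper computes the tangent space directly from the $(r+1)\times(r+1)$ minors generating $\I(\V{M}_r)$, via the first-order expansion $g(A+tX)\equiv g(A)+t\,\tfrac{\partial g}{\partial X}(A)\pmod{t^2}$; after reducing $A$ to $I_r$ it finds that only the minors on $I=[r]\cup\{i\}$, $J=[r]\cup\{j\}$ with $i,j>r$ impose a nontrivial linear condition, giving $\T_A\V{M}_r=\{CA+AC'\}$. You instead push the ambient tangent space forward under the parametrization $\phi(U,V)=UV^T$. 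After the coordinate change both descriptions coincide with the ``matrices whose bottom-right $(m-r)\times(n-r)$ block vanishes,'' and your $2r$-term trilinear decomposition (with the $h_j$ restricted to indices $i>r$ to avoid double counting) is equivalent to the paper's $\sum_i h_i g_i + \sum_i f_i h_i'$; both approaches give $2r$.

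The one place your argument has a gap is the assertion that $A$ is a smooth point of $\V{M}_r$. Fact~\ref{fact:dim-tangent} only gives $\dim\T_A\V{M}_r\ge\dim\V{M}_r$, which is the wrong direction: you need the \emph{upper} bound $\dim\T_A\V{M}_r\le\dim\V{M}_r$ to conclude that the image of $d\phi$ is all of $\T_A\V{M}_r$, and hence that every tangent vector really has a zero bottom-right block. Without this, $\T_A\V{M}_r$ could a priori be strictly larger than the image of $d\phi$, and your decomposition would only cover a subspace, which does not bound $\SR(\T_A\V{M}_r)$ from above. Smoothness of the rank-$r$ locus is of course standard, but the usual proof is precisely the Jacobian/minor computation you set out to avoid, so citing it here is somewhat circular. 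A short independent argument would close the gap: on the Zariski-open set where the top-left $r\times r$ block $B_{11}$ is invertible, the Schur-complement identity shows $\V{M}_r$ is the graph of the regular map $(B_{11},B_{12},B_{21})\mapsto B_{21}B_{11}^{-1}B_{12}$, hence smooth of dimension $r(m+n)-r^2$, and every rank-$r$ matrix lies in such a chart after a permutation. With that added, your proof is correct; the tradeoff versus the paper is that the paper's direct computation of $\ker\J_A$ both identifies the tangent space \emph{and} certifies smoothness in one step, while your parametrization argument is geometrically cleaner but must import smoothness from elsewhere.
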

To prove Lemma~\ref{lemma:SR-Mr} will need the following result,
which explicitly describes the tangent space to $\V{M}_r$ at any matrix of rank exactly $r$. 
It can be deduced from Example~14.16 in~\cite{Harris}.
We prove it below for completeness.
\begin{proposition}[Tangents of determinantal varieties]\label{fact:tangent-matrices}
	The tangent space to $\V{M}_r=\V{M}_r(\KK^{m \times n})$, for any algebraically closed field $\KK$, at any matrix $A \in \V{M}_r$ with $\rank(A)=r$ is
	$$\T_A \V{M}_r = \{C A + A C' \mid C \in \KK^{m\times m}, C' \in \KK^{n \times n}\}.$$
\end{proposition}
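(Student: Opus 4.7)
My plan is to prove the identity by establishing one inclusion directly and matching dimensions for the other.

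\emph{Easy inclusion and dimension of the right-hand side.} For any $C \in \KK^{m\times m}$ the curve $\gamma(t) = (I_m + tC)A$ lies entirely in $\V{M}_r$, since left multiplication cannot increase rank. It passes through $A$ at $t=0$ with derivative $CA$, so $CA \in \T_A \V{M}_r$; a symmetric argument gives $AC' \in \T_A \V{M}_r$. Linearity of the tangent space then yields $\{CA + AC' : C,C'\} \subseteq \T_A \V{M}_r$. To compute the dimension of the right-hand side, observe that $\{CA : C\}$ is the space of $m \times n$ matrices whose rows lie in the $r$-dimensional row space of $A$ (dimension $mr$), and $\{AC' : C'\}$ is the space of matrices whose columns lie in the column space of $A$ (dimension $nr$). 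Their intersection consists of matrices factoring as $PXQ$ with $P,Q$ fixed rank-$r$ frames of the column and row spaces of $A$, and $X \in \KK^{r\times r}$ free, of dimension $r^2$. Inclusion-exclusion then gives $\dim\{CA + AC'\} = mr + nr - r^2 = r(m+n-r)$.

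\emph{Upper bound on the tangent space.} Both sides of the desired identity transform equivariantly under the $\GL_m \times \GL_n$ action $(G,H) \cdot A = GAH$, so I may take $A = \begin{pmatrix} I_r & 0 \\ 0 & 0 \end{pmatrix}$. The ideal $\I(\V{M}_r)$ is generated by the $(r+1)\times(r+1)$ minors of the coordinate matrix. For each $(i,j) \in [m-r]\times [n-r]$, I would single out the minor $p_{i,j}$ using rows $[r] \cup \{r+i\}$ and columns $[r] \cup \{r+j\}$. A short block-determinant expansion on $A + tE$, using that the upper-left block $I_r + t E_{[r],[r]}$ is invertible for small $t$, shows that the linearization of $p_{i,j}$ at $A$ in the direction $E$ is exactly the single entry $E_{r+i,\,r+j}$. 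These $(m-r)(n-r)$ linearizations are independent linear functionals on $\KK^{m\times n}$, so the Jacobian of the defining equations has rank at least $(m-r)(n-r)$, forcing $\dim \T_A \V{M}_r \le mn - (m-r)(n-r) = r(m+n-r)$.

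Combining the two steps, $\{CA + AC'\} \subseteq \T_A \V{M}_r$ with both spaces of dimension $r(m+n-r)$, so equality holds. I expect the only real technical step to be the block-determinant calculation in the second paragraph; everything else is either a formal property of tangent spaces (linearity, equivariance) or elementary linear algebra.
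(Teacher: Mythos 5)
Your proof is correct, and it takes a genuinely different route from the paper's. Both arguments reduce to $A=\begin{pmatrix}I_r & 0\\0&0\end{pmatrix}$ by $\GL_m\times\GL_n$-equivariance and both compute the linearization of the ``corner'' $(r{+}1)\times(r{+}1)$ minors $p_{i,j}$ to be $E_{r+i,\,r+j}$. After that the strategies diverge. The paper computes the tangent space \emph{exactly} as the common kernel of the linearizations of \emph{all} $(r{+}1)\times(r{+}1)$ minors, and for this it must invoke the nontrivial fact (Bruns--Vetter plus the Nullstellensatz) that these minors generate the full ideal $\I(\V{M}_r)$, not merely cut out the variety. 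Your dimension-matching argument sidesteps that fact entirely: for the upper bound you only need that the $p_{i,j}$ \emph{lie} in $\I(\V{M}_r)$ (obvious, since they vanish on $\V{M}_r$), because a subset of the ideal still gives a valid upper bound on $\dim\T_A\V{M}_r$; and for the lower bound you produce explicit polynomial curves $t\mapsto(I_m+tC)A$ and $t\mapsto A(I_n+tC')$ inside $\V{M}_r$, which works formally over any algebraically closed field (the ``for small $t$'' phrasing is immaterial---the computation lives in $\KK[t]/(t^2)$). The trade-off is that you must also carry out the linear-algebra count $\dim\{CA+AC'\}=mr+nr-r^2=r(m+n-r)$, but that count is elementary and, as a bonus, makes explicit the well-known dimension $r(m+n-r)$ of $\V{M}_r$. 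Overall your version is a genuine simplification of the input facts needed, at the cost of a slightly longer bookkeeping argument.
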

\begin{proof}
	It will be convenient to work with the following equivalent definition of a tangent space of a variety $\V{V}$ at a point $p \in \V{V}$;
	$$\T_p\V{V} = \{ \B{v} \in \KK^n \mid \forall g \in \I(\V{V}) \colon g(p+t\B{v})-g(p) \equiv 0 \pmod{t^2}\}.$$
	To see this equivalence, observe that, using the Taylor expansion of the polynomial $g$ at the point $p$, we have 
	$g(p+t\B{v})-g(p) \equiv t\frac{\partial g}{\partial \B{v}}(p) \pmod{t^2}$.
	
	Now, we will use the fact that the $(r+1)\times(r+1)$ minors not only cut out the variety $\V{M}_r=\V{M}_r(\KK^{m\times n})$, but in fact generate the ideal $\I(\V{M}_r)$.
	Indeed, this follows from the fact that the ideal $I$ they generate is prime (\cite{BrunsVe}, Theorem 2.10) and so $\sqrt{I}=I$, together with Hilbert's Nullstellensatz which gives $\I(\V{M}_r)=\sqrt{I}=I$.
	Let $g_{I,J}$ denote the minor of the submatrix whose set of rows and columns are given by $I \sub [m]$ and $J \sub [n]$, respectively. 
	Thus, $\I(\V{M}_r) = \langle g_{I,J} \mid |I|=|J|=r+1 \rangle$.
	
	Since $\rank(A)=r$, there are invertible matrices $P \in \FF^{m\times m}$ and $Q \in \FF^{n \times n}$ such that $A=PI_rQ$, where 
	$$I_r = \begin{pmatrix}
		1 & 0 & \cdots & 0 & 0 & \cdots & \cdots & \cdots & 0 \\
		0 & 1 & \cdots & 0 & 0 & \cdots & \cdots & \cdots & 0 \\
		\vdots & \vdots & \ddots & \vdots & \vdots & \vdots & \vdots & \vdots & \vdots \\
		0 & 0 & \cdots & 1 & 0 & \cdots & \cdots & \cdots & 0 \\
		0 & 0 & \cdots & 0 & 0 & \cdots & \cdots & \cdots & 0\\
		\vdots  & \vdots & \vdots & \vdots & \vdots & \vdots & \vdots & \vdots & \vdots \\
		0 & 0 & \cdots & 0 & 0 & \cdots & \cdots & \cdots & 0
	\end{pmatrix}_{m \times n}$$
	has the $r \times r$ identity matrix as the upper-left submatrix, that is, the submatrix whose set of rows $I$ and set of columns $J$ are $I=[r]$ and $J=[r]$.
	
	Let $X \in \KK^{m \times n}$. Put $Y = P^{-1}XQ^{-1} \in \KK^{m \times n}$. 
	For every $g = g_{I,J}$ with $|I|=|J|=r+1$ 
	we have $g(A)=0$ and 
	$$g(A+tX) = g(P(I_r+tY)Q) = g(P)g(I_r+tY)g(Q).$$
	It follows that
	\begin{center}
		$g(A+tX)-g(A) \equiv 0 \pmod{t^2} \linebreak\quad\text{ if and only if }\quad 
		g(I_r+tY) \equiv 0 \pmod{t^2}$.
	\end{center}
	Write $Y=(y_{i,j})_{i,j}$. 
	Observe that if $I=[r]\cup\{i\}$ and $J=[r]\cup\{j\}$ for some $i>r$ and $j>r$ then $g_{I,J}(I_r+tY) \equiv ty_{i,j} \pmod{t^2}$, and otherwise $g_{I,J}(I_r+tY) \equiv 0 \pmod{t^2}$.
	Thus, $Y$ satisfies $g(I_r+tY) \equiv 0 \pmod{t^2}$ for every $g \in \I(\V{M}_r)$ if and only if $y_{i,j}=0$ for every $i>r$ and $j>r$, or equivalently, $Y=Y_1I_r + I_rY_2$ for some $Y_1 \in \KK^{m \times m}$ and $Y_2 \in \KK^{n \times n}$.
	We deduce 
	\begin{align*}
		\T_A\V{M}_r &= \{ X \in \KK^{m \times n} \mid \forall g \in \I(\V{M}_r) \colon\\
		& g(A+tX)-g(A) \equiv 0 \pmod{t^2}\}\\
		&= \{ PYQ \mid \exists Y_1 \in \KK^{m\times m}, Y_2 \in \KK^{n\times n} \colon Y = Y_1I_r + I_r Y_2\}\\
		&= \{ (PY_1P^{-1})A + A(Q^{-1} Y_2 Q) \mid Y_1 \in \KK^{m\times m}, Y_2 \in \KK^{n\times n}\}\\
		&= \{C A + A C' \mid C \in \KK^{m\times m}, C' \in \KK^{n \times n}\},
	\end{align*}
	completing the proof.
\end{proof}

We note that any matrix $A$ with $\rank(A)=r$ is a nonsingular point of $\V{M}_r$ (i.e., $\dim\T_A \V{M}_r = \dim\V{M}_r$), whereas any matrix $B$ with $\rank(B)<r$ is a singular point, and in fact, $\T_B\V{M}_r(\KK^{m \times n}) = \KK^{m \times n}$. 

\begin{proof}[Proof of Lemma~\ref{lemma:SR-Mr}]
	We identify $A=(a_{i,j}) \in \V{M}_r(\KK^{m \times n})$ with the bilinear form given by $A(\B{x},\B{y}) = \B{x}^T A \B{y} = \sum_{i,j} a_{i,j} x_i y_j$.
	Since $A \in \KK^{m \times n}$ and $\rank(A) \le r$, there are linear forms $f_1(\B{x}),\ldots,f_r(\B{x}) \in \KK[\B{x}]$ and linear forms $g_1(\B{y}),\ldots,g_r(\B{y}) \in \KK[\B{y}]$ such that
	$$A(\B{x},\B{y}) = \sum_{i=1}^r f_i(\B{x})g_i(\B{y}) .$$
	It follows that any matrix of the form $C A + A C'$, with $C \in \KK^{m\times m}$ and $C' \in \KK^{n \times n}$, has a corresponding bilinear form
	\begin{align}\label{eq:decomp}
		\begin{split}
			\B{x}^T (C A + A C') \B{y} 
			&= (C^T\B{x})^T A \B{y} + \B{x}^T A (C'\B{y})\\
			&= \sum_{i=1}^r f_i(C^T\B{x})g_i(\B{y}) + \sum_{i=1}^r f_i(\B{x})g_i(C'\B{y}) .
		\end{split}
	\end{align}
	Now, let $B_1,\ldots,B_d$ be any basis of $\T_A \V{M}_r$. 
	Then $\T_A \V{M}_r$ corresponds to the trilinear form $T=\sum_{k=1}^d z_k B_k(\B{x},\B{y})$ in the variables $\B{x},\B{y},\B{z}$. 
	By Proposition~\ref{fact:tangent-matrices}, 
	for each $k \in [d]$ we can write $B_k = C_kA+AC_k'$ for some $C_k \in \KK^{m\times m}$ and $C'_k \in \KK^{n \times n}$.
	Using the decomposition in~(\ref{eq:decomp}), we obtain the trilinear decomposition
	\begin{align*}
		T &= \sum_{k=1}^d z_k \cdot B_k(\B{x},\B{y})\\
		&= \sum_{k=1}^d z_k\Big(\sum_{i=1}^r f_i(C_k^T\B{x})g_i(\B{y}) + \sum_{i=1}^r f_i(\B{x})g_i(C_k'\B{y})\Big)\\
		&= \sum_{i=1}^r h_i(\B{x},\B{z}) g_i(\B{y}) + \sum_{i=1}^r f_i(\B{x})h'_i(\B{y},\B{z}) 
	\end{align*}
	where
	$$h_i(\B{x},\B{z}) := \sum_{k=1}^d z_kf_i(C_k^T\B{x}),\quad 
	h'_i(\B{y},\B{z}) := \sum_{k=1}^d z_k g_i(C_k'\B{y}) .$$
	Note that each $h_i \in \KK[\B{x},\B{z}]$ and $h'_i \in \KK[\B{y},\B{z}]$ are bilinear forms over $\KK$, 
	and recall that each $f_i \in \KK[\B{x}]$ and $g_i \in \KK[\B{y}]$ are linear forms over $\KK$.
	We deduce that each of the $2r$ summands in the decomposition of $T$ above is a trilinear form of slice rank at most $1$ over $\KK$.
	This completes the proof.
\end{proof}

\section{Slice rank vs. geometric rank}\label{sec:SR-GR}

In this section we prove the core of our main result, linearly bounding the slice rank of a tensor from above by its geometric rank.

\begin{theorem}\label{theo:main}
	For any $3$-tensor $T$ over any perfect field $\FF$,
	$$\SR(T) \le 3\GR(T).$$
\end{theorem}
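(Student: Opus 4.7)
The plan is to work over the algebraic closure $\KK = \overline{\FF}$ and identify $T$ with a matrix subspace $\V{L} \preceq \KK^{n_1 \times n_2}$ (both $\SR$ and $\GR$ depend only on this subspace). The strategy combines the determinantal variety $\V{M}_r$ with the slice-rank bound on its tangent spaces from Lemma~\ref{lemma:SR-Mr}: the idea is to peel off from $\V{L}$ a low-slice-rank sub-matrix-space, leaving only a small-codimension remainder.

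For a threshold $r$ to be chosen, consider $\V{L} \cap \V{M}_r$ and pick a matrix $A$ of rank exactly $r$ lying in a top-dimensional irreducible component; such an $A$ is automatically a smooth point of $\V{M}_r$. By Proposition~\ref{prop:tangent-cap} and $\T_A \V{L} = \V{L}$ (since $\V{L}$ is linear),
$$\T_A(\V{L} \cap \V{M}_r) \,\subseteq\, \V{L} \cap \T_A \V{M}_r \,=:\, \V{W}.$$
Fact~\ref{fact:dim-tangent} then gives $\dim \V{W} \ge \dim(\V{L} \cap \V{M}_r)$, while Lemma~\ref{lemma:SR-Mr}, combined with monotonicity of slice rank under matrix-space inclusion, gives $\SR(\V{W}) \le 2r$. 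Decomposing $\V{L} = \V{W} \oplus \V{L}''$ with $\dim \V{L}'' \le \codim_\V{L}(\V{L} \cap \V{M}_r)$, and using that $\SR(\V{L}'') \le \dim \V{L}''$, we obtain the key inequality
$$\SR_\KK(\V{L}) \,\le\, 2r + \codim_\V{L}(\V{L} \cap \V{M}_r).$$

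It remains to choose $r$ so that this right-hand side is at most $3\GR(T)$. I would establish this via an incidence-variety argument on $\V{I} := \{(A, x) \in \V{L} \times \KK^{n_1} : x^T A = 0\}$: computing $\dim \V{I}$ via projection to $\V{L}$ stratified by $\rank A$ gives $\max_r [\dim(\V{L} \cap \V{M}_r) + n_1 - r]$, while via projection to $\KK^{n_1}$ (using that the generic fiber of $\phi_x \colon A \mapsto x^T A$ has dimension $\dim \V{L} - \GR$) it equals $n_1 + \dim \V{L} - \GR$. Equating yields some $r^*$ with $r^* + \codim_\V{L}(\V{L} \cap \V{M}_{r^*}) \le \GR(T)$, so $r^* \le \GR$, and plugging in,
$$\SR_\KK(\V{L}) \,\le\, 2r^* + (\GR - r^*) \,=\, r^* + \GR \,\le\, 2\GR(T).$$
Finally, Proposition~\ref{prop:stable-rk} (which is where perfectness of $\FF$ enters) descends the decomposition from $\KK$ back to $\FF$; the slack up to $3\GR(T)$ absorbs this descent step plus any other technical losses.

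The hardest part will be the rigorous dimension computation on $\V{I}$: it always contains the trivial component $\{0\} \times \KK^{n_1}$, and may contain further components from strata where the rank of $\phi_x$ drops. One has to verify, via standard codimension bounds on rank loci of matrices of linear forms, that the ``main'' component (generic $x$ paired with rank-$r$ matrices $A$) dominates; and also that the optimizing $r^*$ is genuinely realized by a rank-exactly-$r^*$ matrix in a top-dimensional component of $\V{L} \cap \V{M}_{r^*}$ (if not, descend to the largest valid $r \le r^*$, which only costs a bounded amount in the final constant).
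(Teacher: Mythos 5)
Your proposal is essentially the paper's own argument. The key inequality
$\SR_{\KK}(\V{L}) \le 2r + \codim_{\V{L}}(\V{L}\cap\V{M}_r)$ via the tangent space $\T_A\V{M}_r$ at a smooth point $A$ of exact rank $r$, followed by the Derksen descent, is precisely Proposition~\ref{theo:core} plus Proposition~\ref{prop:stable-rk}. Three remarks on where the paper is tighter or cleaner than your sketch. First, the incidence-variety computation you flag as the hardest part is exactly the content of Fact~\ref{fact:GR}, which the paper imports as a black box from~\cite{KoppartyMoZu20}: that result already says $\GR(T) = \min_r\,[\,r + \codim\{\B{x} : \rank T(\B{x},\cdot,\cdot)=r\}\,]$, and a short argument (isomorphism of $\V{X}_r$ with $\V{L}_r \times \overline{\FF}^{n_1-d}$ after normalizing the slices) converts this to $\codim_{\V{L}}\V{L}_r$; re-deriving it from scratch via $\V{I}$ works but duplicates that proof, and you should be careful with the double-parametrization in your $\V{I}$ (you index by both $A\in\V{L}$ and $\B{x}$, though $A=\sum_i x_iA_i$ is determined by $\B{x}$ once the slices are a basis of $\V{L}$). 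Second, the degenerate case where the top-dimensional component of $\V{L}_r$ contains no matrix of rank exactly $r$ is handled in the paper by a clean induction on $r$: if that component lies in $\V{M}_{r-1}$ then $\codim_{\V{L}}\V{L}_{r-1}\le\codim_{\V{L}}\V{L}_r$ while $r-1 < r$, so descending strictly improves the bound. Your ``descend to the largest valid $r$'' fix is correct for exactly this reason, but it is worth verifying that descending only helps rather than waving it into the constant. Third, and relatedly, there is no slack to absorb: the argument gives $\overline{\SR}(T)\le 2\GR(T)$ on the nose, and Proposition~\ref{prop:stable-rk} multiplies by exactly $3/2$, so the constant $3$ is the exact product, not a generous cushion.
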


We in fact get the slightly better constant $2$ instead of $3$ in Theorem~\ref{theo:main}, at the price of allowing the slice rank decomposition to use coefficients from an algebraic extension.

Let $\overline{\SR}(T)$ denote, for a tensor $T$, the slice rank over the algebraic closure of the field of coefficients of $T$. In other words, if $T$ is a tensor over $\FF$ then $\overline{\SR}(T)$ allows coefficients from the algebraic closure $\overline{\FF}$, rather than just from $\FF$, in the decomposition of $T$ into slice-rank one summands.
Clearly, $\overline{\SR}(T) \le \SR(T)$.
We note that for matrices, $\rank$ and $\overline{\rank}$ are equal. 
For tensors we have the following inequality, essentially due to Derksen~\cite{Derksen20} (we include a proof sketch at the end of this section).

\begin{proposition}[\cite{Derksen20}]\label{prop:stable-rk}
For any $3$-tensor $T$ over any perfect field,
$\frac23\SR(T) \le \overline{\SR}(T)$.
\end{proposition}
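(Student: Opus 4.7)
My plan is to use Galois descent, following Derksen~\cite{Derksen20}. Set $r:=\overline{\SR}(T)$ and fix a slice-rank decomposition $T=\sum_{i=1}^{r}u_i$ over $\overline{\FF}$, where each $u_i=f_i g_i$ is a slice-rank-one summand with $f_i$ a linear form in one of $\B{x},\B{y},\B{z}$ (the \emph{type} of $u_i$) and $g_i$ bilinear in the other two. Only finitely many coefficients appear, so the decomposition is defined over some finite extension of $\FF$; using that $\FF$ is perfect, we enlarge it to a finite Galois extension $\KK/\FF$ with group $G=\mathrm{Gal}(\KK/\FF)$.

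Since $\sigma(T)=T$ for every $\sigma\in G$, averaging yields $T=\tfrac{1}{|G|}\sum_i\sum_\sigma\sigma(u_i)$, and each orbit sum $N_i:=\sum_\sigma\sigma(f_i)\sigma(g_i)$ is $\FF$-rational and of the same slice type as $u_i$. Let $d_i:=\dim_\FF\mathrm{span}_\FF\{\sigma(f_i):\sigma\in G\}$ and pick an $\FF$-basis $e_{i,1},\ldots,e_{i,d_i}$ of this span. I would then expand $\sigma(f_i)=\sum_j\lambda_{i,j}^{\sigma}e_{i,j}$ and regroup, obtaining a presentation $N_i=\sum_{j=1}^{d_i}e_{i,j}h_{i,j}$ with each $h_{i,j}$ bilinear over $\FF$ and of the correct type. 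This gives $\SR_\FF(N_i)\le d_i$ and hence $\SR_\FF(T)\le\sum_i d_i$.

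The main obstacle, and the core of Derksen's argument, is to choose the initial decomposition so that $\sum_i d_i\le \tfrac{3}{2}r$. A naive bound only yields $d_i\le|G|$, which may be arbitrarily large. The saving comes from two sources: the linear-form spans of distinct Galois orbits of the same slice type have substantial overlap (so the total dimension is much smaller than $r|G|$), and one has a three-fold freedom in reassigning the slice type of each summand. By carefully balancing the three types and matching up Galois orbits so that their orbital $\FF$-spans are shared, one improves the straightforward Galois-averaging bound $\SR_\FF(T)\le 2r$ by an additional factor of $\tfrac{3}{4}$, giving $\tfrac{3}{2}r$. The technical combinatorial-algebraic construction realizing this saving is the heart of the proof, for which I would follow the argument in~\cite{Derksen20}.
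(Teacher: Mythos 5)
The paper's proof of this proposition does not perform any Galois descent on a slice-rank decomposition. It simply combines three black-box facts about Derksen's \emph{$G$-stable rank} $\rank^G_\FF(T)$: that it is invariant under passing from a perfect field $\FF$ to $\overline{\FF}$ (Derksen's Theorem~2.5), that it is at most $\SR(T)$ (Corollary~3.7), and that it is at least $\tfrac23\SR(T)$ (Proposition~4.9). Chaining these gives $\tfrac23 \SR_\FF(T) \le \rank^G_\FF(T) = \rank^G_{\overline{\FF}}(T) \le \overline{\SR}(T)$, with no averaging over a Galois group and no manipulation of slice-rank-one summands. Your plan is therefore not merely ``following Derksen'' with a different presentation; it is attempting an entirely different proof, and the crucial step you defer to Derksen (the bound $\sum_i d_i \le \tfrac32 r$) is simply not present in that reference.

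Beyond the misattribution, the argument as written has a concrete technical problem and a genuine gap. The technical problem: you set $d_i = \dim_\FF\mathrm{span}_\FF\{\sigma(f_i)\}$ and ``pick an $\FF$-basis $e_{i,1},\dots,e_{i,d_i}$'' of that $\FF$-span, but that span sits inside $\KK[\B{x}]_1$, so any basis of it lies in $\KK[\B{x}]_1$, not $\FF[\B{x}]_1$; the resulting $h_{i,j}$ are then not $\FF$-rational and you have not produced a decomposition over $\FF$. What one should do is take the $\KK$-span $W_i = \mathrm{span}_\KK\{\sigma(f_i)\}$, which is a $\Gal(\KK/\FF)$-stable $\KK$-subspace, apply Galois descent for vector spaces (Hilbert~90/Speiser) to get an $\FF$-rational basis of $W_i$, and set $d_i = \dim_\KK W_i$; one must also replace the averaging $\tfrac{1}{|G|}\sum_\sigma$ by a normal-basis/trace argument to avoid dividing by $|G|$ in positive characteristic. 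The genuine gap is the final step: even after these repairs you only obtain $\SR_\FF(T) \le \sum_i d_i$ with each $d_i$ potentially as large as $|G|$, so a priori you get $\SR_\FF(T) \le |G|\cdot\overline{\SR}(T)$, not a constant-factor bound. The claim that a careful re-typing of summands and overlap of orbit spans forces $\sum_i d_i \le \tfrac32 r$ is stated with no mechanism, and there is no argument in \cite{Derksen20} you can fall back on, since Derksen's $\tfrac23$ constant comes from a linear-programming comparison between $G$-stable rank and slice rank, not from controlling Galois orbits of linear forms. As it stands the heart of the proof is missing.
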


We will also need the following properties of slice rank, which are easily deduced from definition. 
For convenience of application, we state them for matrix spaces.

\begin{proposition}\label{prop:SR-properties}
The slice rank satisfies the following properties, where $\V{L}$ and $\V{L'}$ are linear subspaces of matrices:
\begin{enumerate}
	\item(Dimension bound) $\SR(\V{L}) \le \dim \V{L}$,
	\item(Monotonicity) $\SR(\V{L'}) \le \SR(\V{L})$ if $\V{L'} \preceq \V{L}$, 
	\item(Sub-additivity) $\SR(\V{L}+\V{L'}) \le \SR(\V{L})+\SR(\V{L'})$.
\end{enumerate}
\end{proposition}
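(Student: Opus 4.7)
The plan is to prove all three properties directly from the definition of slice rank, using the $\mathrm{GL}$-invariance emphasized in Subsection~\ref{subsec:notation} to handle basis choices. Each item corresponds to a simple manipulation of the underlying slice-rank decomposition.

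For the dimension bound, I would fix any basis $B_1,\ldots,B_d$ of $\V{L}$ and write the associated trilinear form as $T = \sum_{k=1}^d z_k B_k(\B{x},\B{y})$. Each summand $z_k \cdot B_k(\B{x},\B{y})$ is, by definition, a linear form in $\B{z}$ multiplied by a bilinear form in $(\B{x},\B{y})$, hence is a slice-rank-$1$ term. This exhibits $T$ as a sum of $d$ such terms, giving $\SR(\V{L})\le d = \dim \V{L}$.

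For monotonicity, I would extend a basis $A_1,\ldots,A_{d'}$ of $\V{L}'$ to a basis $A_1,\ldots,A_d$ of $\V{L}$. Then the tensors attached to $\V{L}$ and $\V{L}'$ are $T = \sum_{k=1}^d z_k A_k(\B{x},\B{y})$ and $T' = \sum_{k=1}^{d'} z_k A_k(\B{x},\B{y})$, respectively, and $T'$ is obtained from $T$ by the substitution $z_{d'+1}=\cdots=z_d=0$. Applying this substitution term-by-term to any slice-rank decomposition $T=\sum_{i=1}^r f_ig_i$ produces a decomposition of $T'$ with the same number of summands, each of which remains a product of a linear form (in one variable set) with a bilinear form (in the other two). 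Hence $\SR(\V{L}')\le\SR(\V{L})$.

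For sub-additivity, I would pick bases $A_1,\ldots,A_a$ of $\V{L}$ and $B_1,\ldots,B_b$ of $\V{L}'$ and use their union as a (possibly redundant) spanning set of $\V{L}+\V{L}'$. With fresh variable sets $\B{z}=(z_1,\ldots,z_a)$ and $\B{w}=(w_1,\ldots,w_b)$, the resulting tensor is the sum $T+T'$, where $T$ and $T'$ are the tensors attached to $\V{L}$ and $\V{L}'$ (in disjoint third-axis variables). Concatenating slice-rank decompositions of $T$ and $T'$ gives a slice-rank decomposition of $T+T'$ of size $\SR(\V{L})+\SR(\V{L}')$. The only subtlety—and the main (mild) obstacle—is that this spanning set may fail to be a basis of $\V{L}+\V{L}'$; but any linear dependency among the $A_k,B_j$ can be removed by an invertible linear change of coordinates in the combined $(\B{z},\B{w})$ variables, which by $\mathrm{GL}$-invariance does not alter slice rank. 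Thus $\SR(\V{L}+\V{L}')\le\SR(\V{L})+\SR(\V{L}')$.
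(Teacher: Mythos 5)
Your proof is correct. Note that the paper gives no proof of this proposition, stating only that the three properties ``are easily deduced from definition''; your argument---exhibiting the dimension bound by writing the tensor via a basis as $\sum_k z_k B_k(\B{x},\B{y})$, obtaining monotonicity by a zero-substitution of the extra $z$-variables in a slice-rank decomposition, and proving sub-additivity by concatenating decompositions over disjoint third-axis variables and then eliminating redundant slices via a $\GL$-change of coordinates (reducing to the monotonicity case)---is precisely the kind of routine verification the authors had in mind, and correctly respects the basis-independence convention from Subsection~\ref{subsec:notation}.
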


\subsection{Linear sections of determinantal varieties}

For $\V{L} \preceq \KK^{m \times n}$ a matrix space we define the variety $\V{L}_r = \V{L} \cap \V{M}_{r}$ (here $\V{M}_{r}=\V{M}_{r}(\KK^{m \times n})$)
of all matrices in $\V{L}$ of rank at most $r$.
We next bound the slice rank of a matrix space using these linear sections of a determinantal variety. 
We denote by $\codim_L \V{X}$ the codimension of a variety $\V{X} \sub L$ inside a linear space $L$; that is, $\codim_L \V{X} = \dim L - \dim \V{X}$.

\begin{proposition}\label{theo:core}
Let $\V{L} \preceq \KK^{m \times n}$ be a matrix space over any algebraically closed field $\KK$. 
For any $r \in \NN$, 
$$\SR(\V{L}) \le 2r + \codim_\V{L} \V{L}_r.$$
\end{proposition}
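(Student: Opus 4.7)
The plan is to exhibit a large linear ``core'' subspace $\V{W} \preceq \V{L}$ whose slice rank is at most $2r$, and then complete to $\V{L}$ via a small linear complement whose dimension is at most $\codim_{\V{L}}\V{L}_r$. Given this, Proposition~\ref{prop:SR-properties} (sub-additivity together with the dimension bound) immediately yields $\SR(\V{L}) \le 2r + \codim_{\V{L}}\V{L}_r$.

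To build $\V{W}$ I would use the tangent-space machinery developed in Section~\ref{sec:tan}. First pick an irreducible component $\V{V}$ of $\V{L}_r = \V{L}\cap\V{M}_r$ of maximum dimension, so $\dim\V{V} = \dim\V{L}_r$; then let $r' = \max_{X \in \V{V}} \rank(X) \le r$, and choose $A \in \V{V}$ with $\rank(A)=r'$. Such an $A$ exists because the matrices of rank $<r'$ form a proper closed subvariety of the irreducible $\V{V}$. Now define $\V{W} := \V{L} \cap \T_A \V{M}_{r'}$. Since $\V{L}$ is a linear subspace we have $\T_A \V{L} = \V{L}$, and because $\V{V} \sub \V{L} \cap \V{M}_{r'}$, Proposition~\ref{prop:tangent-cap} gives $\T_A\V{V} \sub \T_A\V{L} \cap \T_A\V{M}_{r'} = \V{W}$. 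Combined with Fact~\ref{fact:dim-tangent} this yields $\dim\V{W} \ge \dim\T_A\V{V} \ge \dim\V{V} = \dim\V{L}_r$, so $\codim_{\V{L}}\V{W} \le \codim_{\V{L}}\V{L}_r$. At the same time, because $\rank(A)=r'$ exactly, Lemma~\ref{lemma:SR-Mr} gives $\SR(\T_A\V{M}_{r'}) \le 2r'$, and monotonicity gives $\SR(\V{W}) \le 2r' \le 2r$.

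To conclude, take any linear complement $\V{L}''$ of $\V{W}$ inside $\V{L}$; its dimension is at most $\codim_{\V{L}}\V{L}_r$, so $\SR(\V{L}'') \le \codim_{\V{L}}\V{L}_r$ by the dimension bound. Sub-additivity then delivers $\SR(\V{L}) \le \SR(\V{W}) + \SR(\V{L}'') \le 2r + \codim_{\V{L}}\V{L}_r$, as required.

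The main subtlety is the choice of $A$. One cannot simply grab an arbitrary rank-$r$ matrix in $\V{L}$ for two reasons: first, $\V{L}_r$ may contain no matrix of rank exactly $r$ (if the maximum rank attained is some $r' < r$); second, picking a matrix of rank strictly below the index of the determinantal variety one works in would make the tangent space jump to the full ambient space $\KK^{m\times n}$, voiding Lemma~\ref{lemma:SR-Mr}. The ``max rank inside a max-dimensional irreducible component'' recipe is precisely what simultaneously resolves both issues, ensuring that $A$ is a smooth point of $\V{M}_{r'}$ (so the tangent bound $2r'$ applies) and that $\T_A\V{V}$ is already large enough to force $\V{W}$ to have the desired small codimension in $\V{L}$.
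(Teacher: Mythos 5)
Your proof is correct and rests on exactly the same ingredients as the paper's: a maximal-dimensional irreducible component $\V{V}$ of $\V{L}_r$, a smooth point $A$ of the relevant determinantal variety, the containment $\T_A\V{V}\subseteq\V{L}\cap\T_A\V{M}_{r'}$ from Proposition~\ref{prop:tangent-cap} with Fact~\ref{fact:dim-tangent} for the codimension bound, Lemma~\ref{lemma:SR-Mr} for the slice-rank bound, and then complement plus sub-additivity. The one genuine difference is structural: the paper proceeds by induction on $r$ and falls back to the induction hypothesis when $\V{V}\subseteq\V{M}_{r-1}$, whereas you bypass the induction entirely by setting $r'=\max_{X\in\V{V}}\rank(X)$ up front, observing $\V{V}\subseteq\V{M}_{r'}$ so the tangent-space argument applies at level $r'$, and then using $2r'\le 2r$; this is a slight streamlining that yields the same bound in one pass.
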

\begin{proof}
We proceed by induction on $r$. Note that the base case $r=0$, which reads $\SR(\V{L}) \le 0+\codim_\V{L} \{\B{0}\} = \dim \V{L}$, follows from Proposition~\ref{prop:SR-properties}.
We thus move to the inductive step. 

Let $\V{V}$ be an irreducible component of $\V{L}_r$ with $\dim \V{V} = \dim \V{L}_r$,
and let $A \in \V{V} \sm \V{M}_{r-1}$.
We may indeed assume $\V{V} \sm \V{M}_{r-1} \neq \emptyset$, as otherwise $\V{V} \sub \V{L}_{r-1}$ and thus 
$\dim\V{L}_r = \dim\V{V} \le \dim \V{L}_{r-1}$ 
and we are done via the induction hypothesis by taking codimensions.
Let $\V{P} \preceq \V{L}$ be the linear subspace 
$\V{P} = \V{L} \,\cap\, \T_A\V{M}_r$. 
We will prove:
\begin{enumerate}
	\item\label{item:SR-bd} $\SR(\V{P}) \le 2r$, 
	\item\label{item:codim-bd} $\codim_\V{L} \V{P} \le \codim_\V{L} \V{L}_r$.
\end{enumerate}
To see why this would complete the inductive step, 
let $\V{P}^\perp$ be a complement subspace of $\V{P}$ in $\V{L}$, 
and note that
$$\SR(\V{L}) \le \SR(\V{P}) + \SR(\V{P}^\perp) \le \SR(\V{P}) + \codim_{\V{L}} \V{P} \le 2r + \codim_{\V{L}} \V{L}_r$$
where the first and second inequalities use Proposition~\ref{prop:SR-properties}, and the third inequality uses Items~(\ref{item:SR-bd}) and~(\ref{item:codim-bd}). 

For the proof of Item~(\ref{item:SR-bd}), we have
$$\SR(\V{P}) = \SR(\V{L} \cap \T_A \V{M}_r) \le \SR(\T_A \V{M}_r) \le 2r$$
where the first inequality uses Proposition~\ref{prop:SR-properties}, and the second inequality uses Lemma~\ref{lemma:SR-Mr} as $\rank(A)=r$.
For the proof of Item~(\ref{item:codim-bd}), we have
\begin{align*}
	\dim\V{L}_r = \dim \V{V} &\le \dim \T_A \V{V} \\ 
	&\le \dim \T_A \V{L}_r\\
	&\le \dim( \T_A \V{L} \,\cap\, \T_A \V{M}_r) \\
	&= \dim(\V{L} \,\cap\, \T_A \V{M}_r) 
	= \dim \V{P}
\end{align*}
where the first inequality uses Fact~\ref{fact:dim-tangent}, 
the second inequality uses Proposition~\ref{prop:tangent-cap} together with the fact that $\V{V} \sub \V{L}_r$, 
the third inequality uses Proposition~\ref{prop:tangent-cap} again,
and the last equality uses $\T_A \V{L}=\V{L}$ since $\V{L}$ is a linear subspace.
As the above varieties are subvarieties of $\V{L}$, we obtain
$\codim_{\V{L}} \V{P} \le \codim_{\V{L}} \V{L}_r$.
This proves Item~(\ref{item:codim-bd}) and therefore completes the proof of the inductive step.
\end{proof}

We note that an immediate corollary of Proposition~\ref{theo:core}
is a slice rank upper bound of $2r$ for any subspace of matrices of rank at most $r$.

\subsection{Putting everything together}

To prove Theorem~\ref{theo:main} we also need the following characterization of geometric rank.
Recall that $\GR(T)=\codim \ker T$ where $\ker T=\{(\B{x},\B{y}) \mid T(\B{x},\B{y},\cdot)=\B{0} \}$.
\begin{fact}[\cite{KoppartyMoZu20}]\label{fact:GR}
For any $3$-tensor $T$ over any field,
$$\GR(T) = \min_r \, r + \codim\{\B{x} \mid \rank T(\B{x},\cdot,\cdot)=r\} .$$
\end{fact}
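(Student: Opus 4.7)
The plan is to compute $\dim \ker T$ directly by stratifying the projection onto the $\B{x}$ axis and applying the fiber-dimension theorem from algebraic geometry. For $\B{x} \in \overline{\FF}^{n_1}$, let $M(\B{x}) \in \overline{\FF}^{n_2 \times n_3}$ be the $\B{x}$-slice of $T$, i.e.\ the matrix with $(j,k)$ entry $\sum_i a_{i,j,k} x_i$, so that $\rank M(\B{x}) = \rank T(\B{x},\cdot,\cdot)$. The condition $T(\B{x},\B{y},\cdot) = \B{0}$ is equivalent to $M(\B{x})^T \B{y} = \B{0}$, so the projection $\pi \colon \ker T \to \overline{\FF}^{n_1}$ onto the first factor has fiber $\pi^{-1}(\B{x}) = \ker M(\B{x})^T$, a linear subspace of $\overline{\FF}^{n_2}$ of dimension exactly $n_2 - \rank M(\B{x})$.

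Next I stratify the base by the constructible sets $S_r = \{\B{x} \mid \rank M(\B{x}) = r\}$, each obtained as the difference of two determinantal varieties cut out by the $(r{+}1)\times(r{+}1)$ and $r\times r$ minors of $M(\B{x})$ (whose entries are linear in $\B{x}$). Over $S_r$ the map $\pi$ has constant fiber dimension $n_2 - r$, so $\dim \pi^{-1}(S_r) = \dim S_r + (n_2 - r)$ by the fiber-dimension theorem. Since $\ker T = \bigsqcup_r \pi^{-1}(S_r)$, we get $\dim \ker T = \max_r\bigl(\dim S_r + n_2 - r\bigr)$, and taking codimensions in $\overline{\FF}^{n_1}\times\overline{\FF}^{n_2}$ yields
$$\GR(T) = (n_1 + n_2) - \dim \ker T = \min_r \bigl(r + (n_1 - \dim S_r)\bigr) = \min_r \bigl(r + \codim S_r\bigr),$$
where strata with $S_r = \emptyset$ are ignored (convention: $\codim \emptyset = +\infty$).

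The main subtlety will be justifying the fiber-dimension step cleanly, since the strata $S_r$ are merely constructible rather than closed or irreducible. The cleanest workaround is to argue irreducible component by irreducible component: each component $\V{V}$ of $\ker T$ projects to an irreducible subvariety on which $\pi|_\V{V}$ has generic fiber dimension $n_2 - r$ for some $r$, so the classical fiber-dimension theorem for morphisms of irreducible varieties gives $\dim \V{V} = \dim \pi(\V{V}) + (n_2 - r)$, and taking the maximum across components recovers the displayed formula. An equivalent alternative is to observe that, restricted to $S_r$, the map $\pi$ is literally a vector-space bundle of rank $n_2 - r$, which makes dimension additivity immediate without invoking generic-fiber machinery at all.
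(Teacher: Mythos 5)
Your proposal is correct and takes essentially the same approach as the paper, which for this fact only gives a proof sketch (decompose $\ker T$ by the rank of the $\B{x}$-slice, apply a fiber-dimension result, use that the codimension of a finite union is the minimum of the codimensions) and refers to Theorem~3.1 of \cite{KoppartyMoZu20} for the formal details. Your stratification $\ker T = \bigsqcup_r \pi^{-1}(S_r)$, with the fiber-dimension theorem applied component by component for the upper bound and the local vector-bundle structure over $S_r$ for the lower bound, is exactly that argument spelled out.
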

Fact~\ref{fact:GR} is proved via the decomposition
$$\ker T = \bigcup_r \{(\B{x},\B{y}) \in \ker T \mid \rank T(\B{x},\cdot,\cdot)=r\} ,$$
using a result from algebraic geometry on the dimensions of fibers, and the fact that the codimension of a finite union of varieties is the minimum of their codimensions.
We refer to Theorem~3.1 in~\cite{KoppartyMoZu20} for the formal proof.

We are now ready to prove the main result of this section.
First, we show how to obtain Proposition~\ref{prop:stable-rk} from the results in~\cite{Derksen20}.

\begin{proof}[Proof sketch of Proposition~\ref{prop:stable-rk}]
This is obtained by combining Theorem~2.5, Corollary~3.7, and Proposition~4.9 in~\cite{Derksen20}.
These results show that the ``$G$-stable rank'' $\rank^G_\FF(T)$ over a perfect field $\FF$ satisfies the following properties, respectively: 
\begin{itemize}
	\item $\rank^G_\FF(T)=\rank^G_{\overline{\FF}}(T)$,
	\item $\rank^G_\FF(T) \le \SR(T)$,
	\item $\rank^G_\FF(T) \ge (2/3)\SR(T)$.
\end{itemize}
Putting these together gives
$\frac23 \SR(T) \le \rank^G_\FF(T) = \rank^G_{\overline{\FF}}(T) \le \overline{\SR}(T)$, as claimed.
\end{proof}

\begin{proof}[Proof of Theorem~\ref{theo:main}]
Suppose $T=(a_{i,j,k})_{i,j,k} \in \FF^{n_1 \times n_2 \times n_3}$ with $\FF$ an arbitrary field.
Let $\V{L} \preceq \overline{\FF}^{n_2\times n_3}$ be the matrix space spanned by the $n_1$ slices $A_1=(a_{1,j,k})_{j,k},\ldots,A_{n_1}=(a_{n_1,j,k})_{j,k}$. 
Note that we may assume, by acting with general linear group $\GL_{n_1}(\FF)$ on $T$, that the first $d:=\dim\V{L}$ slices $A_1,\ldots,A_{d}$ of $T$ are linearly independent and the rest are zero matrices; indeed, this action does not change $\GR(T)$ (see Lemma~4.2 in~\cite{KoppartyMoZu20}) nor does it change $\SR(T)$.

Note that for any $\B{x} \in \overline{\FF}^{n_1}$, the bilinear form $T(\B{x},\cdot,\cdot)$ corresponds to the matrix $\sum_i x_iA_i$;
indeed, 
\begin{align*}
	T(\B{x},\cdot,\cdot) &\colon (\B{y},\B{z}) \mapsto \sum_{i,j,k} a_{i,j,k}x_iy_jz_k = \sum_i x_i \sum_{j,k} a_{i,j,k}y_jz_k \\
	&= \sum_i x_i \B{y}^TA_i\B{z} = \B{y}^T\Big(\sum_i x_iA_i\Big)\B{z}.
\end{align*}
Using our assumption that $A_i = \B{0}$ for every $i>d$, 
let 
\begin{align*}
	\V{X}_r &= \{\B{x} \in \overline{\FF}^{n_1} \mid \rank T(\B{x},\cdot,\cdot) \le r\}\\ 
	&= \{\B{x} \in \overline{\FF}^{n_1} \mid \rank\big(x_1A_1+\cdots+x_dA_d\big) \le r\}.
\end{align*}
We claim that $\codim\V{X}_r=\codim_\V{L} \V{L}_r$. 
Recall that $\V{L}_r = \{ A \in \V{L} \mid \rank A \le r\}$. 
First, we show that the variety $\V{X}_r$ is isomorphic to the variety $\V{L}_r \times \overline{\FF}^{n_1-d}$. 
Indeed, the polynomial map (in fact linear) 
$$(x_1,\ldots,x_{n_1}) \mapsto (x_1A_1+\cdots+x_dA_d,x_{d+1},\ldots,x_{n_1})$$ 
maps $\V{X}_r$ to $\V{L}_r\times \overline{\FF}^{n_1-d}$, and is invertible via a polynomial map (in fact linear) by our assumption that $A_1,\ldots,A_{d}$ are linearly independent.
We deduce from this isomorphism the equality of dimensions $\dim\V{X}_r = \dim(\V{L}_r\times \overline{\FF}^{n_1-d})$, 
or equivalently, $\codim\V{X}_r = n_1-\dim\V{X}_r = d-\dim\V{L}_r = \codim_\V{L} \V{L}_r$, as claimed.

Let $r$ achieve the minimum in Fact~\ref{fact:GR}. This implies that
$\GR(T) = r + \codim \V{X}_r$. 
By Theorem~\ref{theo:core},
\begin{align*}
	\overline{\SR}(T) &= \SR(\V{L}) \le 2r + \codim_\V{L} \V{L}_r
	= 2r + \codim \V{X}_r\\
	&= 2\GR(T) - \codim \V{X}_r 
	\le 2\GR(T).
\end{align*}
Assuming further that $\FF$ is a perfect field and using Proposition~\ref{prop:stable-rk}, 
we finally obtain the bound
$\SR(T) \le \frac32\overline{\SR}(T) \le 3\GR(T)$, 
as desired.
\end{proof}

\section{Geometric rank vs.\ analytic rank}\label{sec:GR-AR}

Our main result in this section gives an essentially tight upper bound on the geometric rank in terms of the analytic rank. 

\begin{proposition}\label{prop:GR-AR}
For any $3$-tensor $T$ over any finite field $\FF$,
$$\AR(T) \ge (1-\log_{|\FF|}2)\GR(T) .$$
\end{proposition}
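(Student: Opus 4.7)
The plan is to convert the analytic rank into a count of $\FF$-rational points of the variety $\ker f$, and then bound that count by realizing $\ker f$ inside a complete intersection of $g := \GR(T)$ bilinear forms, at which point Bezout's theorem and a Schwartz-Zippel estimate for varieties give the result. The starting point is the identity
\[
|\FF|^{-\AR(T)} = \Exp_{\B{x},\B{y},\B{z}}\chi(T(\B{x},\B{y},\B{z})) = \Pr_{\B{x},\B{y}}[f(\B{x},\B{y})=\B{0}] = \frac{|\ker f \cap \FF^{n_1+n_2}|}{|\FF|^{n_1+n_2}} ,
\]
obtained by first averaging over $\B{z}$, which turns the inner character sum into the indicator that all $f_k$ vanish at $(\B{x},\B{y})$. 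Writing $N = n_1+n_2$, the proposition thus reduces to the point-count bound $|\ker f \cap \FF^N| \le 2^g\,|\FF|^{N-g}$, since taking $-\log_{|\FF|}$ of both sides then yields $\AR(T) \ge g(1 - \log_{|\FF|}2)$.

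To prove this point-count bound, I would construct inside $\overline{\FF}^N$ a pure codimension-$g$ complete intersection $W = \VV(h_1,\ldots,h_g)$ containing $\ker f$, where each $h_i$ is a bilinear form lying in the $\overline{\FF}$-linear span of $f_1,\ldots,f_{n_3}$. The construction is greedy: suppose $V_{k-1} := \VV(h_1,\ldots,h_{k-1})$ is already pure of codimension $k-1 < g$. Every top-dimensional component $Z$ of $V_{k-1}$ then has $\codim Z = k-1 < g = \codim \ker f$, so $Z \not\sub \ker f$, and hence some $f_j$ is non-vanishing on $Z$. The set of coefficient vectors $\alpha \in \overline{\FF}^{n_3}$ for which $\sum_j \alpha_j f_j$ vanishes identically on $Z$ is therefore a proper linear subspace; since $\overline{\FF}$ is infinite and there are only finitely many top-dimensional components, their finite union misses some $\alpha$, and setting $h_k := \sum_j \alpha_j f_j$ ensures via Krull's principal ideal theorem that $V_k$ is again pure, now of codimension $k$. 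Because each $h_k$ lies in the span of the $f_j$'s, the containment $\ker f \sub \VV(h_k)$ holds throughout, so $\ker f \sub W$ at the end.

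Finally, I would apply Bezout's theorem to $W$: as a complete intersection of $g$ polynomials each of total degree $2$, its set-theoretic degree satisfies $\deg W \le 2^g$. Combining this with the Schwartz-Zippel bound for varieties, namely $|V \cap \FF^N| \le \deg(V)\cdot |\FF|^{\dim V}$ for any algebraic variety $V \sub \overline{\FF}^N$, then gives
\[
|\ker f \cap \FF^N| \le |W \cap \FF^N| \le 2^g\,|\FF|^{N-g} ,
\]
completing the argument.

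The main obstacle I foresee is the Schwartz-Zippel step: because the complete intersection $W$ is produced only over $\overline{\FF}$, and may fail to be defined over $\FF$, one must invoke a variety-theoretic version of Schwartz-Zippel that counts $\FF$-points of varieties defined over the algebraic closure, rather than the elementary polynomial version over $\FF$. Establishing (or carefully citing) this variant in the generality needed is presumably what the authors mean when they refer to ``a new generalization of the Schwartz-Zippel lemma appropriate for our setting.'' An alternative would be to try to produce the $h_i$ directly over $\FF$, which seems to require additional case analysis when $|\FF|$ is small compared to the number of components that appear during the greedy construction.
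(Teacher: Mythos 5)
Your proposal matches the paper's proof essentially step-for-step: the reduction of $\AR(T)$ to the point count $|\ker T \cap \FF^N|/|\FF|^N$ is Fact~\ref{fact:AR-ch}, and your target bound $|\ker T \cap \FF^N| \le 2^{\GR(T)}|\FF|^{N-\GR(T)}$ is exactly Lemma~\ref{lemma:SZ} specialized to $d=2$, whose proof likewise enlarges $\ker T$ to an equidimensional set-theoretic complete intersection $\V{V'}\supseteq\ker T$ of the same dimension cut out by $\GR(T)$ generic $\overline{\FF}$-linear combinations of the defining bilinear forms, bounds $\deg\V{V'}\le 2^{\GR(T)}$ via B\'{e}zout (Fact~\ref{fact:bezout-over}), and applies the variety-theoretic Schwartz--Zippel bound $|\V{V}(\FF)|\le\deg\V{V}\cdot|\FF|^{\dim\V{V}}$ (Fact~\ref{fact:SZ}). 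Your greedy Krull-based construction of $W$ and direct appeal to B\'{e}zout for complete intersections is a minor repackaging of the same step, and the subtlety you flag --- that $W$ lives only over $\overline{\FF}$, so one needs a Schwartz--Zippel that counts $\FF$-points of $\overline{\FF}$-varieties --- is precisely what Fact~\ref{fact:SZ} provides.
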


\subsection{Schwartz-Zippel meet B\'{e}zout}

We will need a certain generalized version of the classical Schwartz-Zippel lemma that applies to varieties.
We note that there are various generalized versions of the Schwartz-Zippel lemma appearing in the literature (e.g., Lemma~14 in~\cite{BukhTs12}, Claim~7.2 in \cite{DvirKoLo14}, Lemma A.3 in~\cite{EllenbergObTa10}).
However, in our version below the bound goes down exponentially with the codimension of the variety as soon as the field is larger than the degrees of the polynomials cutting out the variety, which is crucial for proving Proposition~\ref{prop:GR-AR}.

We use the notation $\V{V}(\FF) := \V{V} \cap \FF^n$ for any variety $\V{V} \sub \overline{\FF}^n$ defined over $\FF$.
Recall that a variety $\V{V} = \VV(f_1,\ldots,f_m)$ is said to be cut out by the polynomials $f_1,\ldots,f_m$.

\begin{lemma}[Schwartz-Zippel for varieties]\label{lemma:SZ}
Let $\FF$ be a finite field.
For any variety $\V{V} \sub \overline{\FF}^n$
cut out by polynomials of degrees at most $d$, 
$$\frac{|\V{V}(\FF)|}{|\FF|^n} \le \Big(\frac{d}{|\FF|}\Big)^{\codim\V{V}}.$$
\end{lemma}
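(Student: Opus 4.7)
My plan is to prove the bound by induction on the ambient dimension $n$. The base case $n=0$ is immediate. For the inductive step, the natural approach is to slice $\V{V}$ along the $x_n$ direction. Writing $\V{V}_a := \V{V} \cap \{x_n = a\}$, viewed as a subvariety of $\overline{\FF}^{n-1}$ cut out by the specializations $f_i(x_1,\ldots,x_{n-1},a)$ of degree at most $d$, one has $|\V{V}(\FF)| = \sum_{a \in \FF} |\V{V}_a(\FF)|$, and the inductive hypothesis applies to each slice.

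The heart of the argument is controlling $\codim \V{V}_a$. For each irreducible component $\V{W}$ of $\V{V}$ on which $x_n$ is \emph{non-constant}, Krull's principal ideal theorem forces $\dim(\V{W} \cap \{x_n=a\}) = \dim\V{W}-1$ (or empty), hence $\codim_{\overline{\FF}^{n-1}} \V{V}_a \geq \codim \V{V} = c$. The pathological case is when some component $\V{W}$ is contained in some hyperplane $\{x_n = a_0\}$; then the slice at $a_0$ only has codimension $c-1$ in $\overline{\FF}^{n-1}$. I would eliminate this case via a preliminary linear change of variables --- replacing $x_n$ by $x_n + \sum_{i<n}\lambda_i x_i$ for a suitable $\lambda \in \FF^{n-1}$ --- so that the new coordinate is non-constant on every positive-dimensional irreducible component of $\V{V}$. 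Since each such component rules out only a proper linear subspace of directions, a union bound shows a good $\lambda \in \FF^{n-1}$ exists provided $|\FF|$ exceeds the number of positive-dimensional components. Once this reduction is in place, every slice $\V{V}_a$ has codimension at least $c$ in $\overline{\FF}^{n-1}$, induction gives $|\V{V}_a(\FF)| \leq |\FF|^{n-1}(d/|\FF|)^c$, and summation over $a$ yields $|\V{V}(\FF)| \leq |\FF|^n(d/|\FF|)^c$, as desired.

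The main obstacle is executing the coordinate change when $\FF$ is very small. Fortunately, the bound is trivially satisfied when $|\FF| \leq d$, since there $(d/|\FF|)^c \geq 1 \geq |\V{V}(\FF)|/|\FF|^n$. So the interesting regime is $|\FF| > d$, where one has reasonable room. If the number of positive-dimensional components of $\V{V}$ nevertheless exceeds $|\FF|$, I would fall back on a more delicate argument: either bound this count explicitly via a Bezout/Heintz-type estimate of the form $d^{O(c)}$, or pass to a small-degree algebraic extension $\FF'/\FF$ in which a good direction exists, and then descend back to $\FF$ using $|\V{V}(\FF)| \leq |\V{V}(\FF')|$ together with careful bookkeeping of the slicing argument at the two levels.
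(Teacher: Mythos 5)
Your inductive slicing strategy is genuinely different from the paper's, which bounds $\deg\V{V}\le d^{\codim\V{V}}$ by intersecting with a generic complementary-dimensional linear space and applying overdetermined B\'ezout (Fact~\ref{fact:bezout-over}), then invokes the estimate $|\V{V}(\FF)|\le\deg\V{V}\cdot|\FF|^{\dim\V{V}}$ (Fact~\ref{fact:SZ}). But your induction has a gap that is fatal at top codimension $c=\codim\V{V}=n$, i.e.\ when $\V{V}$ is zero-dimensional. In that case every irreducible component is a point, so no linear change of coordinates makes the new coordinate non-constant on any component, and your reduction --- which explicitly restricts attention to \emph{positive}-dimensional components --- is vacuous. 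More to the point, any nonempty slice $\V{V}_a\sub\overline{\FF}^{\,n-1}$ has codimension at most $n-1<c$, so the asserted inequality ``$\codim_{\overline{\FF}^{\,n-1}}\V{V}_a\ge c$'' simply cannot hold. The inductive hypothesis then yields only $|\V{V}_a(\FF)|\le d^{\,n-1}$, and summing over $a\in\FF$ gives $|\V{V}(\FF)|\le|\FF|\,d^{\,n-1}$, which is strictly weaker than the target $d^{\,n}$ exactly in the nontrivial regime $|\FF|>d$. You cannot rescue this by noting that most slices are empty: the number of nonempty slices is bounded only by $|\V{V}|$ itself, which is the quantity at issue. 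This zero-dimensional case is precisely the overdetermined B\'ezout inequality, and your argument never supplies it.

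A secondary unresolved point is the existence of a good $\lambda\in\FF^{n-1}$: your union bound requires $|\FF|$ to exceed the number of positive-dimensional irreducible components, which does not follow from $|\FF|>d$. Neither fallback you sketch closes this. A B\'ezout/Heintz-type count of order $d^{O(c)}$ can still exceed $|\FF|$, and passing to an extension $\FF'$ only yields $|\V{V}(\FF)|\le|\V{V}(\FF')|\le|\FF'|^{\,n-c}d^{\,c}$, which is strictly weaker than the desired $|\FF|^{\,n-c}d^{\,c}$ whenever $c<n$; moreover slicing along an $\FF'$-direction while still counting $\FF$-points makes the summation-over-$a\in\FF$ step break down, since the new last coordinate of an $\FF$-point need not lie in $\FF$.
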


We note that the classical Schwartz-Zippel lemma is recovered as the special case of Lemma~\ref{lemma:SZ} where $\V{V}$ is cut out by a single polynomial $p$. Indeed, in this case, Lemma~\ref{lemma:SZ} says that if $p$ is a non-zero polynomial, meaning $\codim\V{V}=1$, then $|\V{V}(\FF)|/|\FF|^n \le d/|\FF|$.

Let $\V{V}^0$ denote the union of the $0$-dimensional irreducible components of a variety $\V{V}$. Note that $\V{V}^0$ is a finite set.
For the proof of Lemma~\ref{lemma:SZ} we will use the overdetermined case of B\'{e}zout's inequality, which provides an upper bound on $|\V{V}^0|$ (see~\cite{Tao11}, Theorem 5).

\begin{fact}[Overdetermined B\'{e}zout's inequality]\label{fact:bezout-over}
Let $\V{V} = \VV(f_1,\ldots,f_m) \sub \KK^n$ be a variety, for an algebraically closed field $\KK$, 
cut out by $m \ge n$ polynomials. 
Write $\deg f_1 \ge \cdots \ge \deg f_m \ge 1$. 
Then 
$$|\V{V}^0| \le \prod_{i=1}^n \deg f_i.$$
\end{fact}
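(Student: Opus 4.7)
The plan is to reduce the overdetermined statement to the classical ``square'' ($m=n$) case of Bézout's theorem by passing to generic linear combinations that preserve the $0$-dimensional part of $\V{V}$.

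Given $f_1, \ldots, f_m$ with $m \ge n$ and $\deg f_1 \ge \cdots \ge \deg f_m$, I would define
$$g_i := f_i + \sum_{j=i+1}^m \lambda_{ij}\, f_j \qquad (i = 1, \ldots, n),$$
with $\lambda_{ij} \in \KK$ to be chosen. Since $\deg f_j \le \deg f_i$ for $j > i$, this automatically gives $\deg g_i \le \deg f_i$, and trivially $\V{V} = \VV(f_1,\ldots,f_m) \sub \VV(g_1,\ldots,g_n)$. The key claim is that for a generic choice of $(\lambda_{ij})$, every $p \in \V{V}^0$ remains an isolated point of $\VV(g_1,\ldots,g_n)$.

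To prove the claim I would work in the local ring $R = \mathcal{O}_{\KK^n,p}$, which is Noetherian of Krull dimension $n$. The hypothesis that $p$ is isolated in $\V{V}$ says that $(f_1,\ldots,f_m)R$ is $\mathfrak{m}_p$-primary, i.e.\ has height $n$. I would construct $g_1,\ldots,g_n$ inductively so that $(g_1,\ldots,g_k)R$ has height $k$ at every step $k$. At step $k+1$ one chooses $\lambda_{k+1,*}$ so that $g_{k+1}$ avoids each of the finitely many minimal primes $\mathfrak{q}$ of height $k$ above $(g_1,\ldots,g_k)R$. This is possible by prime avoidance over the infinite field $\KK$, provided that not all of $f_{k+1},\ldots,f_m$ lie in any such $\mathfrak{q}$: if they did, the upper-triangular shape $g_i = f_i + \sum_{j>i}\lambda_{ij}f_j$ would force $f_k, f_{k-1},\ldots,f_1 \in \mathfrak{q}$ by successive back-substitution, so $(f_1,\ldots,f_m) \sub \mathfrak{q}$ and $\mathrm{height}(\mathfrak{q}) \ge n > k$, a contradiction. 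Since $\V{V}^0$ is finite and the good set of $\lambda$'s is a non-empty Zariski-open subset of an affine space over $\KK$, a single generic $(\lambda_{ij})$ works for all $p \in \V{V}^0$ simultaneously.

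With $g_1,\ldots,g_n$ in hand, the classical square Bézout theorem (e.g.\ via homogenization and intersection theory in $\mathbb{P}^n$) gives $|\VV(g_1,\ldots,g_n)^0| \le \prod_{i=1}^n \deg g_i$, and combining with $\deg g_i \le \deg f_i$ yields
$$|\V{V}^0| \;\le\; |\VV(g_1,\ldots,g_n)^0| \;\le\; \prod_{i=1}^n \deg g_i \;\le\; \prod_{i=1}^n \deg f_i,$$
as claimed. The main obstacle is the inductive prime-avoidance step: the combinations $g_i$ are constrained (the coefficient of $f_i$ is pinned at $1$, and $f_1,\ldots,f_{i-1}$ do not appear), so one must verify that the remaining freedom in $\lambda_{ij}$ is enough to escape every minimal prime arising in the induction. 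This is exactly where the degree ordering $\deg f_1 \ge \cdots \ge \deg f_m$, together with the hypothesis $\mathrm{height}(f_1,\ldots,f_m) = n$, enters essentially via the back-substitution argument above.
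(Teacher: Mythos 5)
The paper itself does not prove Fact~\ref{fact:bezout-over}: it is quoted as known, with a pointer to~\cite{Tao11}, Theorem~5, so there is no internal proof to compare against. Your argument is a correct, self-contained proof, and it follows the standard route taken in that cited source: reduce to exactly $n$ polynomials via generic triangular combinations $g_i=f_i+\sum_{j>i}\lambda_{ij}f_j$ (whose degrees are at most $\deg f_i$ thanks to the ordering), verify by the local height induction plus prime avoidance that every point of $\V{V}^0$ remains isolated in $\VV(g_1,\ldots,g_n)$, and then apply the square case $|\VV(g_1,\ldots,g_n)^0|\le\prod_{i=1}^n\deg g_i$. The back-substitution step is exactly the right reason the bad set of $\lambda_{k+1,*}$ for each minimal prime $\mathfrak{q}$ of $(g_1,\ldots,g_k)\mathcal{O}_{\KK^n,p}$ is a proper affine subspace, and the contradiction with $\mathrm{height}(\mathfrak{q})=k<n=\mathrm{height}\big((f_1,\ldots,f_m)\mathcal{O}_{\KK^n,p}\big)$ is sound. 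Two small points to tighten. First, the good set of parameters is not naturally one Zariski-open set, since the primes to be avoided at step $k+1$ depend on the earlier choices; nothing is lost if you simply choose the rows $\lambda_{k+1,*}$ sequentially, each time avoiding the finite union, over the finitely many $p\in\V{V}^0$ and the finitely many minimal primes at $p$, of proper affine subspaces, which is possible because $\KK$ is infinite. Second, the square case you invoke is the affine B\'ezout inequality for \emph{isolated} points; it does not follow from naive homogenization (components at infinity may be shared), but rather from the refined, intersection-multiplicity version, which is classical and is in particular proved in the very source the paper cites, so quoting it is legitimate.
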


The \emph{degree} of an equidimensional\footnote{All irreducible components have the same dimension.} variety $\V{V} \sub \KK^n$, denoted $\deg\V{V}$, is the cardinality of the intersection of $\V{V}$ with a generic linear subspace in $\KK^n$ of dimension $\codim\V{V}$ (a well-defined, finite number).
The degree of an arbitrary variety $\V{V}$ is the sum of the degrees of its irreducible components.
The proof of Lemma~\ref{lemma:SZ} will ``bootstrap'' the following generalization of the Schwartz-Zippel lemma. 

\begin{fact}[\cite{BukhTs12},\cite{DvirKoLo14}]\label{fact:SZ}
Let $\FF$ be a finite field.
For any variety $\V{V}$ over $\overline{\FF}$,
$$|\V{V}(\FF)| \le \deg\V{V}\cdot|\FF|^{\dim\V{V}}.$$
\end{fact}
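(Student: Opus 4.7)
My plan is to prove the bound by induction on $d = \dim \V{V}$, using hyperplane sections to drop dimension one at a time. First I would reduce to the case where $\V{V}$ is irreducible: writing $\V{V} = \V{V}_1 \cup \cdots \cup \V{V}_t$ as a union of its irreducible components, the union bound gives $|\V{V}(\FF)| \le \sum_i |\V{V}_i(\FF)|$, and since $\deg \V{V} = \sum_i \deg \V{V}_i$ by definition and $\dim \V{V}_i \le \dim \V{V}$, summing the target inequality over components yields the same inequality for $\V{V}$.

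The base case $d = 0$ is immediate: an irreducible $0$-dimensional variety over $\overline{\FF}$ is a single point, so $|\V{V}(\FF)| \le 1 = \deg \V{V} = \deg \V{V} \cdot |\FF|^0$. For the inductive step, suppose $d \ge 1$ and $\V{V}$ is irreducible. Because $\V{V}$ has positive dimension it cannot be a single point, so some coordinate function, say $x_1$, is non-constant on $\V{V}$. Then for each $a \in \overline{\FF}$ the intersection $\V{W}_a := \V{V} \cap \{x_1 = a\}$ is a proper closed subvariety of the irreducible $\V{V}$, hence has dimension at most $d-1$. The key geometric ingredient is the hyperplane-section bound
\begin{equation*}
\deg \V{W}_a \;\le\; \deg \V{V} ,
\end{equation*}
valid whenever $\{x_1 = a\}$ does not contain $\V{V}$; this is a standard B\'{e}zout-type inequality for proper hyperplane sections.

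Given this ingredient, I partition $\V{V}(\FF)$ according to the value of the first coordinate and apply the induction hypothesis to each slice:
\begin{equation*}
|\V{V}(\FF)| \;=\; \sum_{a \in \FF} |\V{W}_a(\FF)| \;\le\; \sum_{a \in \FF} \deg \V{W}_a \cdot |\FF|^{d-1} \;\le\; |\FF| \cdot \deg \V{V} \cdot |\FF|^{d-1} \;=\; \deg \V{V} \cdot |\FF|^d ,
\end{equation*}
completing the induction.

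The main obstacle is justifying the hyperplane-section inequality $\deg \V{W}_a \le \deg \V{V}$ for an irreducible $\V{V}$ not contained in $\{x_1 = a\}$. The paper's overdetermined B\'{e}zout inequality (Fact~\ref{fact:bezout-over}) addresses only the $0$-dimensional complete-intersection situation, so this positive-dimensional statement must be imported from the algebraic-geometry literature, or proved directly by further generic hyperplane cuts that reduce to the $0$-dimensional case controlled by Fact~\ref{fact:bezout-over}. Once that inequality is in hand, the coordinate-slice induction above is essentially mechanical, and the reduction to irreducible components at the beginning is harmless because $\deg$ is additive on components while $|\FF|^{\dim}$ is monotone in dimension.
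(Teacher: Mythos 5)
The paper does not prove this fact at all --- it is imported verbatim from \cite{BukhTs12} (Lemma~14) and \cite{DvirKoLo14} (Claim~7.2) --- so there is no internal proof to compare against. Your argument is correct and is, in essence, the standard proof found in those references: reduce to irreducible components using the additivity of degree (which is how the paper defines $\deg$ for arbitrary varieties, so that step is consistent with the conventions in force), handle dimension $0$ by noting an irreducible zero-dimensional variety over $\overline{\FF}$ is a point of degree $1$, and then induct on dimension by partitioning $\V{V}(\FF)$ along the fibers of a coordinate that is non-constant on $\V{V}$. The bookkeeping $\sum_{a\in\FF}\deg\V{W}_a\cdot|\FF|^{d-1}\le \deg\V{V}\cdot|\FF|^{d}$ is right, and you correctly observe that $\V{V}\not\subseteq\{x_1=a\}$ for every $a$ once $x_1$ is non-constant on the irreducible $\V{V}$.

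The one ingredient you leave as an import, $\deg(\V{V}\cap\V{H})\le\deg\V{V}$ for a hyperplane $\V{H}\not\supseteq\V{V}$, is indeed the genuinely nontrivial input; it is the affine B\'{e}zout inequality $\deg(\V{X}\cap\V{Y})\le\deg\V{X}\cdot\deg\V{Y}$ with degree understood as the sum over \emph{all} irreducible components (Fulton, Heintz), which is exactly the convention needed since $\V{W}_a$ need not be equidimensional. You are right that the paper's Fact~\ref{fact:bezout-over} does not yield this directly, and citing the general B\'{e}zout inequality here is legitimate rather than a gap; just be aware that this inequality carries essentially the full content of the lemma, so a referee would want the citation made precise rather than described as ``standard.'' With that reference supplied, the proof is complete.
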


\begin{proof}[Proof of Lemma~\ref{lemma:SZ}]
We claim that the following inequality holds
assuming $\V{V}$ is equidimensional;
$$\deg \V{V} \le d^{\codim \V{V}}.$$
Suppose $\V{V}$ is cut out by $m$ polynomials of degree at most $d$. 
Note that $m \ge \codim\V{V}$.
Consider the variety obtained by intersecting $\V{V}$ with a generic linear subspace in $\overline{\FF}^n$ of dimension $\codim\V{V}$, 
and observe that it can be embedded as a variety $\V{W} \sub \overline{\FF}^{n_0}$ with $n_0=\codim\V{V}$.
Then $\V{W}$ satisfies the following properties:
\begin{itemize}
	\item $\dim\V{W} = 0$,
	\item $\V{W}$ is cut out by $m$ polynomials of degree at most $d$.
\end{itemize}	
In particular, and similarly to the above, $m \ge n_0$.
It follows that 
$$\deg \V{V} = |\V{W}| = |\V{W}^0| \le d^{n_0} = d^{\codim\V{V}},$$
where the first equality is by the definition of $\deg\V{V}$, the second equality uses $\V{W}=\V{W}^0$ as $\dim\V{W}=0$, 
and the inequality applies Fact~\ref{fact:bezout-over} since we are in the overdetermined case $m \ge n_0$.

To finish the proof, we need to remove the assumption in the above inequality that $\V{V}$ is equidimensional.
This is immediate using the standard technique of replacing the polynomials cutting out $\V{V}$ with generic linear combinations thereof, giving a variety $\V{V'} \supseteq \V{V}$ over $\overline{\FF}$ with $\dim\V{V'}=\dim\V{V}$ that is an equidimensional set-theoretic complete intersection (see, e.g., Kollar~\cite{Kollar88}, Theorem~1.5).\footnote{Alternatively, this can be done by defining an appropriate variant of degree for varieties that are not equidimensional.}
Indeed, 
starting with a single polynomial (which trivially cuts out an equidimensional variety), 
adding each such linear combination $p$, 
that is, intersecting with the hypersurface $\V{H}_p$ corresponding to $p$, reduces by one the dimension of every irreducible component $\V{X}$
(since $\V{X} \nsubseteq \V{H}_p$ as long as $\V{X} \nsubseteq \V{V}$).
Now, apply Fact~\ref{fact:SZ} to obtain
$$\frac{|\V{V}(\FF)|}{|\FF|^n} 
\le \frac{|\V{V'}(\FF)|}{|\FF|^n} 
\le \frac{d^{\codim\V{V}}|\FF|^{\dim\V{V}}}{|\FF|^n}
= \Big(\frac{d}{|\FF|}\Big)^{\codim\V{V}},$$
as desired.
\end{proof}

\subsection{Putting everything together}

We now deduce the desired bound relating $\GR$ and $\AR$.
We will use the following well known characterization of $\AR$; we include a proof for completeness.
\begin{fact}\label{fact:AR-ch}
For any $3$-tensor $T \in \FF^{n_1 \times n_2 \times n_3}$ over any finite field $\FF$,
$$\AR(T) = -\log_{|\FF|} \Pr_{\B{x},\B{y}}[f(\B{x},\B{y})=\B{0}]$$
where $f \colon \FF^{n_1} \times \FF^{n_2} \to \FF^{n_3}$ is the bilinear map corresponding to $T$.
\end{fact}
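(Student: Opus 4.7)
The plan is to reduce the expectation over all three variables defining $\AR(T)$ to a probability over $\B{x},\B{y}$ alone by first averaging out $\B{z}$ using the standard orthogonality of nontrivial additive characters. Concretely, I would start from the definition $\AR(T) = -\log_{|\FF|}\Exp_{\B{x},\B{y},\B{z}} \chi(T(\B{x},\B{y},\B{z}))$ and write
$$T(\B{x},\B{y},\B{z}) = \sum_{k=1}^{n_3} f_k(\B{x},\B{y}) z_k,$$
where $f=(f_1,\ldots,f_{n_3})$ is the bilinear map corresponding to $T$.

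Next I would condition on $\B{x},\B{y}$ and compute the inner expectation over $\B{z}$. Since the $z_k$ are independent uniform over $\FF$, multiplicativity of $\chi$ gives
$$\Exp_{\B{z}} \chi\!\left(\sum_k f_k(\B{x},\B{y}) z_k\right) = \prod_{k=1}^{n_3} \Exp_{z_k \in \FF} \chi\bigl(f_k(\B{x},\B{y}) z_k\bigr).$$
Each factor equals $1$ if $f_k(\B{x},\B{y})=0$ and $0$ otherwise, by nontriviality of $\chi$ (the sum $\sum_{z \in \FF}\chi(cz)$ vanishes for any $c \neq 0$). Hence the product is the indicator $\mathbf{1}[f(\B{x},\B{y})=\B{0}]$.

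Finally, I would take the outer expectation over $\B{x},\B{y}$, yielding
$$\Exp_{\B{x},\B{y},\B{z}} \chi(T(\B{x},\B{y},\B{z})) = \Exp_{\B{x},\B{y}}\mathbf{1}[f(\B{x},\B{y})=\B{0}] = \Pr_{\B{x},\B{y}}[f(\B{x},\B{y})=\B{0}],$$
and take $-\log_{|\FF|}$ of both sides to conclude. There is no real obstacle here; this is a one-line calculation once Fubini is used to isolate the $\B{z}$-average and character orthogonality is invoked. The only thing to be mindful of is that the expectation on the right is nonnegative and at most $1$, so the logarithm is well-defined (it is $+\infty$ exactly when $f$ has no nontrivial zeros besides the case where the probability is zero, which does not occur here since $(\B{0},\B{0}) \in \ker f$).
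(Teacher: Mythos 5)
Your proof is correct and follows essentially the same route as the paper: expand $T(\B{x},\B{y},\B{z})=\sum_k f_k(\B{x},\B{y})z_k$, average over $\B{z}$ first using independence of the $z_k$ and orthogonality of the nontrivial character $\chi$ to produce the indicator $\mathbf{1}[f(\B{x},\B{y})=\B{0}]$, then take the outer expectation. The only cosmetic difference is your closing remark on well-definedness of the logarithm, which the paper omits (and which you could state more cleanly: the probability is at least $|\FF|^{-(n_1+n_2)}>0$ since $f(\B{0},\B{0})=\B{0}$).
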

\begin{proof}
Write $f=(f_1,\ldots,f_{n_3})$, so that $T(\B{x},\B{y},\B{z})=\sum_{k=1}^{n_3} f_k(\B{x},\B{y})z_k$.
Denote $\bias(T) = \Exp_{\B{x},\B{y},\B{z}} \chi(T(\B{x},\B{y},\B{z}))$,
where $\chi$ is an arbitrary, nontrivial additive character of $\FF$,
so that $\AR(T) = -\log_{|\FF|}\bias(T)$.
Since $f$ is bilinear, we have $\bias(T) = 
\Pr_{\B{x},\B{y}} [f(\B{x},\B{y})=\B{0}]$; 
indeed, 
\begin{align*}
	\bias(T) &= \Exp_{\B{x},\B{y},\B{z}} \chi(T(\B{x},\B{y},\B{z}))
	= \Exp_{\B{x},\B{y},\B{z}} \chi\Big(\sum_k f_k(\B{x},\B{y})z_k \Big)\\
	&= \Exp_{\B{x},\B{y}} \Exp_{\B{z}}\prod_k\chi(f_k(\B{x},\B{y}) z_k)
	= \Exp_{\B{x},\B{y}} \prod_k \Exp_{z \in \FF}\chi(f_k(\B{x},\B{y}) z)\\
	&= \Exp_{\B{x},\B{y}} \prod_k [f_k(\B{x},\B{y})=0]
	= \Exp_{\B{x},\B{y}} [f(\B{x},\B{y})=\B{0}]\\
	&= \Pr_{\B{x},\B{y}} [f(\B{x},\B{y})=\B{0}] ,
\end{align*}
where $[\cdot]$ is the Iverson bracket. 
This completes the proof.
\end{proof}

\begin{proof}[Proof of Proposition~\ref{prop:GR-AR}]
Suppose $T \in \FF^{n_1 \times n_2 \times n_3}$. 
Put $\V{V} = \ker(T) \sub \overline{\FF}^{N}$ with $N=n_1+n_2$.
By Lemma~\ref{lemma:SZ},
$|\V{V}(\FF)|/|\FF|^{N} \le (2/|\FF|)^{\codim\V{V}}$.
Using Fact~\ref{fact:AR-ch}, it follows that
$$\AR(T) = -\log_{|\FF|} \frac{|\V{V}(\FF)|}{|\FF|^{N}}
\ge \codim\V{V} \cdot (1-\log_{|\FF|}2) .$$
As $\GR(T) = \codim\V{V}$, we are done.
\end{proof}

We are finally ready to combine our various bounds and obtain the main result.

\begin{proof}[Proof of Theorem~\ref{main:summary}]
The first inequality is given by Theorem~\ref{theo:main}.
The second inequality follows from Proposition~\ref{prop:GR-AR} for any finite $\FF \neq \FF_2$, since
\begin{align*}
	\GR(T) &\le (1-\log_{|\FF|}2)^{-1}\AR(T)\\ 
	&\le (1-\log_{3}2)^{-1}\AR(T) \le 2.71\AR(T).
\end{align*}
\end{proof}
We note that, as evident from the proof of Theorem~\ref{main:summary}, we in fact obtain the bounds $\SR(T) \le 3\GR(T) \le 3(1+o_{|\FF|}(1))\AR(T)$.

\section{Some complexity results for bilinear maps}\label{sec:apps}

\subsection{Rank vs.\ min-entropy}\label{subsec:ME}

Recall that the min-entropy of a discrete random variable $X$ is 
$$\ME(f) = \min_{x} \, \log_2 \frac{1}{\Pr[X=x]}.$$
With a slight abuse of notation, we define the min-entropy of a function $X \colon A \to B$, with $A$ and $B$ finite, in the same way (using the uniform measure):
$$\ME(X) = \min_{b \in B} \, \log_2\frac{1}{\Pr_{a \in A}[f(a)=b]}
= -\log_2 \max_{b \in B}\frac{|X^{-1}(b)|}{|A|}.$$
Note that we have the trivial bounds $0 \le \ME(X) \le \log_2|B|$, where the lower bounds holds when $X$ is constant and the upper bound when $X$ is $|A|/|B|$-to-$1$.

Recall that $\SR(f)$ denotes the slice rank of the $3$-tensor corresponding to $f$
(which can be thought of as the ``oracle complexity'' of $f$, where the oracle produces any desired, arbitrarily hard matrices).
Towards the proof of Proposition~\ref{main:complexity-bias}, 
we first deduce from Theorem~\ref{main:summary} a tight relation between slice rank and min-entropy for the class of bilinear maps.
\begin{proposition}\label{theo:SR-trade-off}
For any bilinear map $f \colon \FF^{n} \times \FF^n \to \FF^n$ over any 
finite field $\FF \neq \FF_2$,
$$\SR(f) = \Theta\Big(\frac{\ME(f)}{\log_2|\FF|}\Big) .$$
\end{proposition}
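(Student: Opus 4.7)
The plan is to show that $\ME(f)$ and $\AR(f)$ are in fact related by an exact identity, $\ME(f) = \AR(f)\log_2|\FF|$, and then read off the result from Theorem~\ref{main:summary}. By Fact~\ref{fact:AR-ch}, $\AR(f) = -\log_{|\FF|}\Pr_{\B{x},\B{y}}[f(\B{x},\B{y})=\B{0}]$, which rewrites as $(\log_2|\FF|)\AR(f) = -\log_2\Pr[f(\B{x},\B{y})=\B{0}]$. By definition, $\ME(f) = -\log_2\max_{\B{c}}\Pr[f(\B{x},\B{y})=\B{c}]$. So the identity will follow once I verify that the output distribution of a bilinear map places its largest mass on $\B{0}$.

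To see this, fix $\B{y}\in\FF^n$ and consider $L_{\B{y}}\colon\B{x}\mapsto f(\B{x},\B{y})$, which is $\FF$-linear. For any $\B{c}$, the fiber $L_{\B{y}}^{-1}(\B{c})$ is either empty or a translate of $\ker L_{\B{y}}$, so $|L_{\B{y}}^{-1}(\B{c})|\le |L_{\B{y}}^{-1}(\B{0})|$. Averaging over $\B{y}$ yields $\Pr_{\B{x},\B{y}}[f(\B{x},\B{y})=\B{c}]\le \Pr_{\B{x},\B{y}}[f(\B{x},\B{y})=\B{0}]$ for every $\B{c}$, so the maximum is attained at $\B{c}=\B{0}$. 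Combining with the computation above gives the exact equality $\ME(f) = \AR(f)\log_2|\FF|$.

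It remains to invoke the equivalence of $\SR$ and $\AR$. Theorem~\ref{main:summary} provides $\SR(f)\le 8.13\,\AR(f)$ for any finite $\FF\neq\FF_2$, and the reverse bound $\AR(f)\le\SR(f)$ is noted in the introduction (and is immediate from Fact~\ref{fact:AR-ch}, since each slice-rank-one summand contributes a factor of $|\FF|^{-1}$ to the bias). Substituting $\AR(f)=\ME(f)/\log_2|\FF|$ into both sides yields $\SR(f)=\Theta(\ME(f)/\log_2|\FF|)$, as claimed. There is no real obstacle: the substantive content is packaged into Theorem~\ref{main:summary}, and the only ingredient specific to this proposition is the elementary observation that, for bilinear $f$, the output distribution is maximized at $\B{0}$, which turns $\AR$ into a min-entropy (rather than, say, a Rényi-$1$ entropy).
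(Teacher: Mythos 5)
Your proposal matches the paper's proof essentially step for step: both establish $\ME(f) = \AR(f)\log_2|\FF|$ by fixing $\B{y}$, noting that $\B{x}\mapsto f(\B{x},\B{y})$ is linear so its fibers have size at most that of the kernel, and averaging over $\B{y}$ to see the output distribution is maximized at $\B{0}$; both then substitute into the two-sided equivalence $\SR(f)=\Theta(\AR(f))$ from Theorem~\ref{main:summary} together with the easy reverse inequality. The argument is correct and is the paper's argument.
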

\begin{proof}
As $f$ is bilinear, we claim that
$\max_{\B{b}} \Pr_{\B{a}}[f(\B{a})=\B{b}] = \Pr_{\B{a}}[f(\B{a})=\B{0}]$.
Indeed, this follows from the fact that $f(\B{x},\B{y})$ is a linear map for any fixed $\B{y}$, and thus for every $\B{b}$,
\begin{align*}
	\Pr_{\B{x},\B{y}}[f(\B{x},\B{y})=\B{b}] 
	&= \Ex_{\B{y}} \Pr_{\B{x}}[f(\B{x},\B{y})=\B{b}]\\
	&\le \Ex_{\B{y}} \Pr_{\B{x}}[f(\B{x},\B{y})=\B{0}]
	= \Pr_{\B{x},\B{y}}[f(\B{x},\B{y})=\B{0}].
\end{align*}
Therefore, 
$$\ME(f) = \min_{\B{b}} -\log_2 \Pr_{\B{x},\B{y}}[f(\B{x},\B{y})=\B{b}]
=  -\log_2 \Pr_{\B{x},\B{y}}[f(\B{x},\B{y})=\B{0}]\\
= \AR(f)\log_2|\FF|$$
where the last equality uses Fact~\ref{fact:AR-ch}.
We deduce using Theorem~\ref{main:summary} that
$$\SR(f) = \Theta(\AR(f )) = \Theta(\ME(f)/\log_2|\FF|),$$
as desired.
\end{proof}

\begin{proof}[Proof of Proposition~\ref{main:complexity-bias}]
Note that, almost directly from the definitions, $\complexity^*(f) \le n\SR(T)$. The desired bound therefore follows from Proposition~\ref{theo:SR-trade-off},
$$\complexity^*(f) \le n\SR(T) = O(n\ME(f)/\log_2|\FF|).$$
Below we show that our bound is in fact an equality (up to a constant) for almost every bilinear map. 
Let $f \colon \FF^n \times \FF^n \to \FF^n$ be a uniformly random bilinear map. 
We have $\complexity^*(f) = \Theta(n^2)$, since the tensor rank of the corresponding tensor is $\Theta(n^2)$, which is equal to $\Theta(\complexity^*(f))$ for any $\FF$ large enough, as shown by Strassen~\cite{Strassen73} (see also~\cite{Blaser}).
Thus, we next show that $\ME(f)=\Theta(n\log_2|\FF|)$. 
Observe that if $L \colon \FF^n \to \FF^n$ is a uniformly random linear map then for any $\B{0} \neq \B{y} \in \FF^n$ we have that $L(\B{y})$ is uniformly random in $\FF^n$.
Fix $\B{0} \neq \B{y_0} \in \FF^n$. Then for each component $f_i(\B{x},\B{y})=:\B{x}^T A_i \B{y}$ of $f$ we have that $f_i(\B{x},\B{y_0}) = \B{x}^T (A_i \B{y_0})$ is a uniformly random linear form in $\B{x}$. Moreover, these $n$ linear forms $f_1(\B{x},\B{y_0}),\ldots,f_n(\B{x},\B{y_0})$ are independent.
It follows that $f(\B{x},\B{y_0}) \colon \FF^n \to \FF^n$ is a uniformly random linear map. Thus, $f(\B{x},\B{y_0})$ is a bijection.
We conclude that $|f^{-1}(\B{0})|= \big(\sum_{\B{0} \neq \B{y} \in \FF^n} 1\big) + |\FF|^n = 2|\FF|^n-1$ (and $|f^{-1}(\B{b})|=|\FF|^n-1$ for any $\B{b}\neq 0$).	
Therefore, $\ME(f) = \log_2 (|\FF|^{2n}/|f^{-1}(\B{0})|) = \Theta(\log_2(|\FF|^n)) = \Theta(n\log_2|\FF|)$,
as desired.
\end{proof}

\subsection{Approximating bilinear maps}\label{subsec:Approx}

Recall that maps $f,g \colon A \to B$ are said to be $\d$-close if $\Pr_{a \in A}[f(a) = g(a)] = \d$.
Let us recall the classical fact that, for any $\class{NP}$-complete function $f\colon\{0,1\}^* \to \{0,1\}$, say $f=\class{SAT}$, if $f$ can be computed in polynomial time on all but polynomially many inputs then in fact $f$ can be computed in polynomial time. Phrased differently, if $g\colon\{0,1\}^* \to \{0,1\}$ is such that $g_n:=g|_{\{0,1\}^n}$ is $\d$-close to $f_n:=f|_{\{0,1\}^n}$ with $\d=1-\poly(n)/2^n$ then $g \in \class{P}$ implies $f \in \class{P}$.
What would be an optimal analogue of this basic fact when $f$ is coming from the class of bilinear maps?
We note that this restriction is already a radical change of regime. For example, the Schwartz-Zippel lemma implies that if two distinct degree-$d$ forms are $\d$-close then necessarily $\d \le d/|\FF|$. In particular, an agreement that is close to $1$, as in the example above, is impossible in the bilinear setting.

Here we prove Proposition~\ref{main:reduction}, showing 
that it suffices to compute $f$ on a surprisingly small fraction of the inputs in order to be able to compute $f$ on all inputs.
For example, this implies that if $\SR(g) = O(r)$ and $g$ agrees with $f$ on merely an $|\FF|^{-O(r)}$-fraction of the inputs, then already $\SR(f) = O(r)$.
As before, Theorem~\ref{main:summary} supplies the precise bounds we need.

\begin{proof}[Proof of Proposition~\ref{main:reduction}]
Since $\SR$ is subadditive by definition, we have
$$\SR(f) = \SR(g+f-g) \le \SR(g)+\SR(f-g).$$
By Theorem~\ref{main:summary} we have 
$$\SR(f-g) \le O(\AR(f-g)) .$$
Write $\AR(f-g) = -\log_{|\FF|}\bias(f-g)$ and $\bias(f-g) =  \Pr_{\B{x},\B{y}}[(f-g)(\B{x},\B{y}) = \B{0}] = \d$. 
Combining the above inequalities gives  
$$\SR(f) - \SR(g) \le \SR(f-g) \le O(\AR(f-g)) = O(\log_{|\FF|} (1/\d)).$$
By symmetry, the same bound holds when interchanging $f$ and $g$, which proves the desired bound.

Finally, it remains to see that our bound is sharp,
for any value of $\SR(f),\SR(g)$.
Let $r,t$ be positive integers satisfying $t=\Theta(r)$, and let $n \ge r+t$. 
Let $f,g \colon \FF^n \times \FF^n \to \FF^n$ be the bilinear maps $$f(\B{x},\B{y})=(x_1y_1,\ldots,x_ry_r,0,\ldots,0)$$
and
$$g(\B{x},\B{y})=(0,\ldots,0,x_{r+1}y_{r+1},\ldots,x_{r+t}y_{r+t},0,\ldots,0).$$
Recall that for an identity tensor $I_m$ we have $\SR(I_m)=m$ (see, e.g.,~\cite{SawinTao16}) and $\AR(I_m) = \Theta(m)$. 
On the one hand, $|\SR(f)-\SR(g)|=|\SR(I_r)-\SR(I_t)|=|r-t|=\Theta(r)$. 
On the other hand, 
\begin{align*}
	\d &= \Pr_{\B{x},\B{y}}[f(\B{x},\B{y})=g(\B{x},\B{y})]\\ 
	&= \bias(f-g) = \bias(f)\cdot\bias(g) = |\FF|^{\Theta(r)+\Theta(t)}.
\end{align*}
Therefore, $\log_{|\FF|}(1/\d)=\Theta(r+t)=\Theta(r)$ as well, completing the proof.
\end{proof}

\begin{corollary}
Let $\FF \neq \FF_2$ be a finite field.
Any two bilinear maps $f,g\colon \FF^n\to\FF^m$ that are $\d$-close satisfy
$$\complexity^*(f) \le O((\SR(g) + \log_{|\FF|}(1/\d))n) .$$
\end{corollary}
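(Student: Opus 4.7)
The plan is to combine two ingredients already at hand in the paper, with essentially no new work required.

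First, I would invoke the elementary bound $\complexity^*(f) \le n\SR(f)$ used in the proof of Proposition~\ref{main:complexity-bias}: starting from any slice rank decomposition of length $r$ of the tensor corresponding to $f$, every rank-one summand can be realized using at most $n$ non-scalar multiplications. Specifically, after computing the isolated linear form for free, one multiplies it against the $n$ linear combinations supplied by the coefficients of the remaining bilinear form in one of its two variable groups; the analogous count works in the case where the isolated factor uses the output variables, since then the remaining bilinear form needs to be computed only once.

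Second, I would apply Proposition~\ref{main:reduction}, which directly yields
$$\SR(f) \le \SR(g) + O(\log_{|\FF|}(1/\d)) .$$

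Chaining these two inequalities gives
$$\complexity^*(f) \le n\SR(f) \le n\bigl(\SR(g) + O(\log_{|\FF|}(1/\d))\bigr) = O\bigl((\SR(g)+\log_{|\FF|}(1/\d))n\bigr),$$
which is precisely the claim. Since both inputs are already in place, I do not anticipate any genuine obstacle; the only mild subtlety is to verify the bound $\complexity^*(f) \le n\SR(f)$ uniformly across the three possible shapes of a slice rank-one summand (depending on which of $\B{x},\B{y},\B{z}$ is isolated), which is a routine case analysis.
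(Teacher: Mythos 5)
Your proof is correct and is precisely the intended argument: the paper states this corollary without proof, but the two ingredients you combine---the elementary bound $\complexity^*(f)\le n\SR(f)$ (already invoked in the proof of Proposition~\ref{main:complexity-bias}) and Proposition~\ref{main:reduction}---are exactly what the authors have set up, and chaining them gives the claim immediately. Your aside about the three shapes of slice-rank-one summands (and the corresponding per-summand multiplication counts) is the right sanity check to include.
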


\section{Discussion and open questions}\label{sec:open}

Several problems are left open by the results in this paper.
Of course, it would be interesting to extend our methods to higher-order tensors.
It would also be interesting to see other instantiations of classical results of theoretical computer science in the settings of bilinear, or more generally, low-degree polynomial maps.
It would be satisfying to extend our main result,  Theorem~\ref{main:summary}, to $\FF_2$. As of now, the best bound over $\FF_2$ remains $\SR(T) \le O(\AR(T)^4)$, and we wonder whether it might be that a linear upper bound simply does not hold $\FF_2$.

Finally, it remains open to determine the best possible constant $C$ such that $\SR(T) \le C\cdot\GR(T)$.
Let us show below that $C \ge 3/2$.
Over any field $\FF$, let $T \in \FF^{3 \times 3 \times 3}$ denote the Levi-Civita tensor $T=(\varepsilon_{i,j,k})_{i,j,k}$. In other words, the trilinear form corresponding to $T$ is the $3$-by-$3$ determinant polynomial,
$$T(\B{x},\B{y},\B{z})
=\det
\begin{pmatrix}
	x_1 & x_2 & x_3 \\
	y_1 & y_2 & y_3 \\
	z_1 & z_2 & z_3
\end{pmatrix}.$$
We will show that $\GR(T)=2$ and $\SR(T)=3$, giving the bound $C \ge \SR(T)/\GR(T) = 3/2$. 
To compute $\GR(T)$, observe that the bilinear map $f \colon \FF^3 \times \FF^3 \to \FF^3$ corresponding to $T$ is $f(\B{x},\B{y}) = \B{x} \times \B{y}$, that is, the cross product of the vectors $\B{x},\B{y} \in \FF^3$.
Therefore, $(\B{x},\B{y}) \in \ker f$ if and only if $\B{x} \times \B{y} = \B{0}$, that is, $\B{x}$ and $\B{y}$ are linearly dependent.
We deduce that $\GR(T)=\codim\ker f=2$, or equivalently $\dim\ker f = 4$, since $\B{y} \in \FF^3$ is completely determined by $\B{x} \in \FF^3$ together with a scalar multiple in $\FF$.
To compute $\SR(T)$, observe that $x_iy_jz_k$ is a monomial of $T(\B{x},\B{y},\B{z})$ if and only if $i,j,k \in [3]$ are all distinct.
Let $S = \{(i,j,k) \in [3]^3 \mid x_iy_jz_k \text { is in the support of } T\}$.
Observe that $S$ forms an antichain; indeed, $i+j+k=6$ is constant for all $(i,j,k) \in S$.
Thus, by Proposition~4 in~\cite{SawinTao16}, $\SR(T)$ is equal to the vertex cover number of $S$ when viewed as a ($3$-partite) $3$-uniform hypergraph.
Since each vertex of the hypergraph $S$ has degree exactly $2$, it follows that any vertex cover has at least $3!/2$ vertices. We deduce that $\SR(T)=3$.

One can actually obtain an infinite family of $3$-tensors with a similar ratio, implying that $\SR(T)/\GR(T)$ does not drop below $3/2$ even for large tensors. For any $k \in \NN$, let $T_k \in \FF^{3k \times 3k \times 3k}$ be the $k$-fold direct sum of $T$ with itself. We have $\GR(T_k)=k\cdot \GR(T)=2k$ by the additivity of $\GR$ with respect to direct sums (see Lemma~4.3 in~\cite{KoppartyMoZu20}). Moreover, we have $\SR(T_k)=k\cdot\SR(T)=3k$ since the hypergraph corresponding to the support of $T_k$ is a disjoint union of copies of the hypergraph corresponding to the support of $T$, and thus is also a $2$-regular antichain. Therefore, any vertex cover has at least $6k/2$ vertices, implying that $\SR(T_k)=3k$.

Let us end by noting a curious analogy between $\GR/\SR$ and two other notions of rank for $3$-tensors, \emph{commutative rank}/\emph{non-commutative rank}. 
It is known that non-commutative rank is at most twice the commutative rank, which interestingly matches the constant $2$ in our Theorem~\ref{theo:main}.
Moreover, just like in this paper, constructions were given that witness a $3/2$ lower bound, and it was conjectured that $3/2$ might be the correct constant~\cite{FortinRe04}. However, this was recently refuted by Derksen and Makam~\cite{DerksenMa18}, whose construction achieves a ratio that is arbitrarily close to $2$.  
It would be interesting to understand whether there is a deeper analogy between these two pairs of ranks!

\paragraph*{Note added in proof.} 
Since the submission (November 6, 2020) to the 53rd ACM Symposium on Theory of Computing, bounds for higher-order tensors over large enough fields were 
obtained by the authors, using considerably more intricate arguments~\cite{CohenMo21}.
Additionally, similar results to the ones in the current paper for $3$-tensors were obtained by Adiprasito, Kazhdan, and Ziegler~(\cite{AdiprasitoKaZi21}, February 6, 2021).\footnote{Their proof relies in part on a generalization of an argument found in a number-theoretic paper of Schmidt~\cite{Schmidt85}; in the case of $3$-tensors, Schmidt's argument gives an inequality analogous to $\SR(T) \le O(\GR(T))$ over the complex numbers.}


\section*{Acknowledgments} 
We thank the anonymous reviewers for a careful reading and useful suggestions.

\bibliographystyle{amsplain}


\begin{dajauthors}
\begin{authorinfo}[alex]
  Alex Cohen\\
  Massachusetts Institute of Technology\\
  Cambridge, MA, USA\\
  alexcoh\imageat{}mit\imagedot{}edu\\
  \url{https://math.mit.edu/~alexcoh}
\end{authorinfo}
\begin{authorinfo}[guy]
	Guy Moshkovitz\\
	Department of Mathematics\\
	City University of New York (Baruch College)\\
	New York, NY, USA\\
	guymoshkov\imageat{}gmail\imagedot{}com\\
	\url{https://sites.google.com/view/guy-moshkovitz}
\end{authorinfo}
\end{dajauthors}

\end{document}